\title[Affine polar spaces derived from polar spaces and...]{Affine polar spaces derived from polar spaces 
  and Grassmann structures defined on them}
\author{K. Pra{\.z}mowski, M. {\.Z}ynel}
\begin{document}


\def\adjac{\mathrel{\sim}}

\def\toadjac{\mathrel{\lower.35ex\hbox{\baselineskip-3pt\lineskip-3pt\vbox{\hbox{$\sim$}\hbox{$\sim$}}}}}
\let\doadjac\toadjac
\def\soadjac{\lower.25ex\hbox{\scriptsize\baselineskip-2.2pt\lineskip-2.2pt\vbox{\hbox{$\sim$}\hbox{$\sim$}}}}
\def\ssoadjac{\lower.21ex\hbox{\tiny\baselineskip-2.2pt\lineskip-2.2pt\vbox{\hbox{$\sim$}\hbox{$\sim$}}}}
\def\oadjac{\mathchoice{\doadjac}{\toadjac}{\soadjac}{\ssoadjac}}

\def\penc{{\bf p}}
\def\pencx{{\bf p}^\ast}

\def\toadjac{\mathrel{\lower.35ex\hbox{\baselineskip-3pt\lineskip-3pt\vbox{\hbox{$\sim$}\hbox{$\sim$}}}}}
\let\doadjac\toadjac
\def\soadjac{\lower.25ex\hbox{\scriptsize\baselineskip-2.2pt\lineskip-2.2pt\vbox{\hbox{$\sim$}\hbox{$\sim$}}}}
\def\ssoadjac{\lower.21ex\hbox{\tiny\baselineskip-2.2pt\lineskip-2.2pt\vbox{\hbox{$\sim$}\hbox{$\sim$}}}}
\def\badjac{\mathchoice{\doadjac}{\toadjac}{\soadjac}{\ssoadjac}}
\newcommand{\minus}{{\bf -}}

\def\txminus{\lower-.35ex\hbox{\baselineskip4pt\lineskip-7pt\vbox{\hbox{$\minus$}\hbox{$\sim$}}}}
\let\dxminus\txminus
\def\sxminus{\lower-.25ex\hbox{\scriptsize\baselineskip-2pt\lineskip-7pt\vbox{\hbox{$\minus$}\hbox{$\sim$}}}}
\def\ssxminus{\lower.21ex\hbox{\tiny\baselineskip-1.4pt\lineskip-5pt\vbox{\hbox{$\minus$}\hbox{$\sim$}}}}
\def\baminus{\mathchoice{\dxminus}{\txminus}{\sxminus}{\ssxminus}}

\def\K{{\cal K}}
\def\X{{\cal X}}
\def\afhor{{H^\infty}}

\def\propandimprop{\dagger}

\def\colin{{\mbox{\boldmath$L$}}}
\def\colind{{\mbox{\boldmath$L$}^{\propandimprop}}}
\def\TRG{{\mbox{\boldmath$\Delta$}}}

\def\lines{{\cal L}}
\def\izolines{{\cal G}}
\def\peki{{\cal P}}
\def\pekix{{\cal P}^\ast}
\def\pekid{{\cal P}^{\propandimprop}}
\def\subm{\widetilde{\sub}}
\def\upadjac{\mathrel{\adjac^{\mbox{\tiny\boldmath$+$}}}}
\def\downadjac{\mathrel{\adjac_{\mbox{\tiny\boldmath$-$}}}}
\def\paradjac{\mathrel{\adjac^{\shortparallel}}}
\let\botadjac\downadjac
\def\afupadjac{\mathrel{\baminus^{\mbox{\tiny\boldmath$+$}}}}
\def\afdownadjac{\mathrel{\baminus_{\mbox{\tiny\boldmath$-$}}}}
\def\afdownadjacx{\mathrel{\baminus_{\mbox{\tiny\boldmath$-$}}^\ast}}
\let\afbotadjac\afdownadjac
\let\afbotadjacx\afdownadjacx
\def\afupadjac{\mathrel{\baminus^{\mbox{\tiny\boldmath$+$}}}}
\def\afbotadjac{\mathrel{\baminus_{\mbox{\tiny\boldmath$-$}}}}
\def\fixprojdu{\mbox{\boldmath$\goth P$}}
\def\fixprojma{\mbox{\scriptsize\boldmath$\goth P$}}
\def\fixproj{\ensuremath{\mathchoice{\fixprojdu}{\fixprojdu}{\fixprojma}{\fixprojma}}}
\def\fixafdu{\mbox{\boldmath$\goth A$}}
\def\fixafma{\mbox{\scriptsize\boldmath$\goth A$}}
\def\fixaf{\ensuremath{\mathchoice{\fixafdu}{\fixafdu}{\fixafma}{\fixafma}}}
\def\fixpoldu{\mbox{\boldmath$\goth Q$}}
\def\fixpolma{\mbox{\scriptsize\boldmath$\goth Q$}}
\def\fixpol{\ensuremath{\mathchoice{\fixpoldu}{\fixpoldu}{\fixpolma}{\fixpolma}}}
\def\srodek{\mbox{\boldmath$U$}}
\def\pls{partial linear space}

\def\vgen#1{\gen{[#1]}}
\def\agen#1#2{{\mbox{{\boldmath$[$}}{#1}\mbox{{\boldmath$]$}}_{{#2}}}}

\def\starof{{\mathrm{S}}}
\def\polarstar{{\starof_0}}
\def\topof{{\mathrm{T}}}
\def\stars{{\cal S}}
\def\starsx{{\cal S}^\ast}
\def\starsm{{\cal S}^\wedge}
\def\starsw{{\cal S}^\circ}
\def\tops{{\cal T}}
\def\topsx{{\cal T}^\ast}
\def\topsw{{\cal T}^{\circ}}

\def\ProjectiveSpSymb{\mathbf{P}}
\def\PencSpace(#1,#2){\ensuremath{\ProjectiveSpSymb_{#1}(#2)}}
\def\PencSpacex(#1,#2){\ensuremath{\ProjectiveSpSymb^\ast_{#1}(#2)}}
\def\PencSpaced(#1,#2){\ensuremath{\ProjectiveSpSymb^{\propandimprop}_{#1}(#2)}}
\def\GrassmannSpSymb{\mathbf{G}}
\def\GrasSpace(#1,#2){\ensuremath{\GrassmannSpSymb_{#1}(#2)}}
\def\AffineSpSymb{\mathbf{A}}
\def\AfSpace(#1,#2){\ensuremath{\AffineSpSymb_{#1}(#2)}}
\def\KwadrSpSymb{\mathbf{Q}}
\def\KwadrSpace(#1,#2){\ensuremath{\KwadrSpSymb_{#1}(#2)}}
\def\AfPolSpSymb{\mathbf{U}}
\def\AfpolSpace(#1,#2){\ensuremath{\AfPolSpSymb_{#1}(#2)}}
\def\AfpolSpacex(#1,#2){\ensuremath{\AfPolSpSymb^\dagger_{#1}(#2)}}

\def\IsoSpace{{\mbox{\boldmath$\goth Q$}}}
\def\AfPolardu{{\mbox{\boldmath$\goth U$}}}
\def\AfPolarma{{\mbox{\scriptsize\boldmath$\goth U$}}}
\def\AfPolar{\ensuremath{\mathchoice{\AfPolardu}{\AfPolardu}{\AfPolarma}{\AfPolarma}}}
\let\Afpolar\AfPolar
\def\AfPolarx{{\mbox{\boldmath$\goth U$}}^{\propandimprop}}

\def\Quadr{{\mathrm{Q}}}
\def\LineOn(#1,#2){\overline{{#1},{#2}\rule{0em}{1,5ex}}}
\def\horyzont{{\cal H}}

\newenvironment{ctext}{%
  \par
  \smallskip
  \centering
}{%
 \par
 \smallskip
 \csname @endpetrue\endcsname
}

\def\rref#1{\ref{#1})}

\newcounter{sentencex}
\def\thesentencex{\Roman{sentencex}}
\def\labelsentencex{\upshape(\thesentencex)}

\newenvironment{sentencesx}{%
        \list{\labelsentencex}
          {\usecounter{sentencex}\def\makelabel##1{\hss\llap{##1}}
            \topsep3pt\leftmargin0pt\itemindent40pt\labelsep8pt}%
  }{%
    \endlist}

\def\labelsentencey{\bf}

\newenvironment{sentencesy}{%
        \list{\labelsentencey}
          {
            \topsep3pt\leftmargin0pt\itemindent40pt\labelsep8pt}%
  }{%
    \endlist}



\maketitle

\begin{abstract}
  \noindent
  We prove that an affine polar space in the meaning of Cohen and Shult can be
  recovered from one of the three adjacency relations on a Grassmann structure over
  it. 
  The result directly generalizes  the 
  results of our previous work where we use an affine
  space over a vector space equipped with a nondegenerate reflexive form as a
  starting point to the Cohen-Shult affine polar spaces.

  \begin{flushleft}
    Mathematics Subject Classification (2000): 51A50, 51A10.\\
    Key words: affine polar space, adjacency, Grassmann space.
  \end{flushleft}
\end{abstract}

\section*{Introduction}

Affine polar spaces were first introduced by Cohen and Shult in
\cite{cohenshult}. The idea of that new concept resembles the affine reduct of a
projective space. Start with a polar space  (cf. \cite{Veldkamp},
\cite{BS74}, \cite{Cohen} as well as \cite{Tits} and \cite{Kr91}) and simply
delete a fixed hyperplane in it. 
A hyperplane is understood geometrically, i.e. as a proper subspace with the property
that it meets every line in at least one point.
The authors in \cite{cohenshult} do not only show how to construct an
affine polar space in a polar space but they also provide an axiomatic
characterization of it. In this note we prefer the first framework where we can
refer to the ambient polar space for convenience or clarity. 

There are many analogies between polar spaces and affine polar spaces. Following
Tits (cf. \cite{Tits}), the former can be characterized as gamma spaces (cf.
\cite{Cohen}) which every strong subspace is a projective space, and the latter
are gamma spaces, which every strong subspace is an affine space.

It is one of standard questions posed within framework of geometry of (partial) linear
spaces and Grassmann spaces associated with them, whether adjacency relation on 
$k$-subspaces is sufficient to recover the underlying space.
The main goal of \cite{afpolar} was to show that adjacency relation (actually
there are three 
different adjacency relations) on strong $k$-subspaces
of an affine polar space is sufficient to reconstruct the underlying affine
polar space. 
In that point the paper \cite{afpolar} solves problems analogous to those solved
in \cite{polargras} in the context of geometry of polar spaces.
Note that situation investigated in \cite{polargras} was even simpler a bit,
as there are {\em two} distinct different adjacency relations on strong $k$-subspaces
of a polar space.
In \cite{afpolar} we, however, 
did not follow the idea of Cohen and Shult strictly. Instead we started
from an affine space over a vector space with a reflexive form. 
Consequently, while a polar space (with a few exceptions) can be thought of as
a structure consisting of self-conjugate points and lines of a metric projective 
space, an affine polar space of \cite{afpolar} is a structure of isotropic
(i.e. all) points and isotropic lines of a metric affine space.
Additional result of \cite{afpolar} is
that in terms of aforementioned adjacencies as single primitive  notions we can
also reinterpret the underlying affine space.

A natural question here is what is the gain or what is the loss in
the approach of \cite{afpolar}
versus that of Cohen and Shult. The gain is that 
cases where the index of the affine polar space is 1, e.g. Minkowskian geometry, are included.
The loss is that: the symplectic geometry, 
affine polar spaces that arise by deleting nontangent hyperplanes,
and all affine polar spaces with nondesarguesian planes are excluded. 
So, many Cohen-Shult affine polar spaces were not covered 
by the approach of \cite{afpolar}.

The most important question is whether the main result of \cite{afpolar} can be
carried on to Cohen-Shult affine polar spaces and this paper gives a positive
answer. 
That is
we show that, for each reasonable dimension,
the  underlying Cohen-Shult affine polar space can be recovered from the
binary adjacency on strong $k$-subspaces of it (cf. Theorem~\ref{thm:main}).
We also say that the affine polar space (and other respective structures) are
definable in terms of corresponding adjacencies. The term definable,
interpretable, reinterpretable or recoverable used here may raise some
ambiguity: how to {\em define} points and relations on points in terms of subspaces
and relations on them.
A precise discussion of logical foundations of these problems can be found in 
\cite{Pamb06}, \cite[Sect.~6]{afpolar}.
Without coming into details let us say, roughly, that such a definability means 
that points can be identified with couples of subspaces -- elements of corresponding 
structures, and under this identification suitable relations on points can be 
expressed as relations on corresponding couples of subspaces.
Our reasoning here is standard, like in 
many papers on Chow-like theorems. 
The starting point is to determine and characterize the cliques of all
considered adjacency relations in respective geometry. 
To this point we use
here
the readily apparent relationship between affine polar spaces and
polar spaces like the one between affine and projective  spaces. In consequence
the cliques, as well as strong subspaces, in an affine polar space are reducts
of respective cliques and strong subspaces in the "surrounding" polar space in
which our affine polar space is defined. These cliques are used
to reinterpret  other various notions like ternary collinearity relation or
lines. Then, the critical point is an induction step that brings the dimension
$k$ down by  one to dimension 0, which corresponds to the underlying structure.
In order to do that we simply identify $(k-1)$-subspaces with cliques of type
star and reconstruct our adjacency relations on them.
Such a structure of proof is typical and it was also applied in \cite{polargras}
and \cite{afpolar}.
It can be also applied to investigations on adjacencies of subspaces of a projective
and of an affine space, in particular proving the Chow theorem (cf. \cite{chow}) for corresponding
Grassmann structures (see  \cite{huang3}, \cite{huang4}, see also \cite{huang2}).
It is also worth to mention
that as a direct consequence of a reinterpretability of a sort discussed above 
a suitable theorem in the spirit of Chow follows:
a bijection on $k$-subspaces which preserves (in both directions)
the (currently investigated) adjacency is induced by an automorphism of the underlying space.
So, this theorem remains valid also for affine polar spaces. 
In some cases a theorem like that may follow from general properties of the
adjacency graph (cf. \cite{huang1}).
In our case we have three adjacencies and no general theorem like that of
\cite{huang1} can be used.

Our main result, Theorem~\ref{thm:main}, is formulated as generally as possible, but
it is new only for the case where Cohen-Shult approach misses that of \cite{afpolar}. 
We could have deal with this specific case exclusively here but the arguments are the
same as in general.

\section{Notions, problems}

Let $\fixpol = \struct{\Quadr,\lines}$ be a polar space of index $m$ (cf. \cite{BS74}).
The class of polar spaces is characterized axiomatically, but we do not have to quote
here an adequate axiom system. It suffices to imagine $\fixpol$ as a quadric embedded
into a projective space together with the (projective) lines which lay on it.
Remember, however, that these structures constitute only a part of models.
Traditionally, for points $a, b\in\Quadr$ we write $a\perp b$ when they are
\emph{collinear}. Given two subsets $X, Y\subseteq\Quadr$ we write $X\perp Y$
when $x\perp y$ for all $x\in X$, $y\in Y$. We call a subspace $X$ of $\fixpol$
\emph{strong} if $X\perp X$ (it is called singular in \cite{BS74}).
We write $\sub(\fixpol)$ ($\sub_k(\fixpol)$) for the family of all
(of all $k$-dimensional, respectively) strong subspaces of $\fixpol$.
Clearly, $k \leq m$.
Let $\horyzont$ be a hyperplane of $\fixpol$ (cf. \cite{cohenshult}).
This means that $\horyzont$ is a proper subspace of \fixpol\ such that any line in $\lines$
crosses $\horyzont$. 
Let $L\in\lines$. Note that either $|\horyzont \cap L|\geq 2$ and then $L \subset \horyzont$ 
or $|\horyzont \cap L|=1$.
We write $L^\infty = p$ if $\{  p \} = L\cap\horyzont$.
The point $L^\infty$ is referred to as {\em the improper point} of $L$.

The affine polar space \Afpolar\ is a reduct of \fixpol:
the pointset of \Afpolar\ is the set $\Quadr\setminus\horyzont$
and the lineset $\izolines$ of \Afpolar\ is the set 
\begin{ctext}
  $\{ L\setminus\horyzont\colon L\in\lines \} \setminus\{ \emptyset \}
  = \bigl\{ L\setminus \{ L^\infty \}\colon L\in\lines,\; 
  L\not\subset\horyzont \bigr\}$.
\end{ctext}
Clearly, if $l \in\izolines$ then there is the unique $L\in\lines$
such that $l\subset L$. We see that $L = l \cup\{ L^\infty \}$
and write $\overline{l} = L$ as well as $l^\infty = L^\infty$.
Two lines $l_1, l_2\in\izolines$ are parallel, in symbols
$l_1 \parallel l_2$, iff $l_1^\infty = l_2^\infty$.
Recall that the parallelism $\parallel$
is definable in terms of the incidence structure $\Afpolar$ (cf. \cite{cohenshult}).
The same notation $\perp$, $\sub(\AfPolar)$, $\sub_k(\AfPolar)$ and 
related terminology is used with respect to $\Afpolar$ as to $\fixpol$.

A strong subspace of $\AfPolar$ is 
obtained by deleting $\horyzont$ from a strong subspace of $\fixpol$,
so we can write
\begin{equation}
  \sub_k(\AfPolar) = \left\{ X\setminus \horyzont \colon X\in\sub_k(\fixpol),\;
  X \not\subset \horyzont\right\}.
\end{equation}

Note that if $X\in\sub(\fixpol)$ and $X\not\subset \horyzont$ then 
$\dim_{\fixpol}(X) = \dim_{\Afpolar}(X\setminus\horyzont)$.
In particular, the index of \Afpolar\ (i.e. the maximal dimension of a strong
subspace of \Afpolar) is $m$ as well.

Two properties are crucial:
\begin{enumerate}[(i)]
\item\label{localProj}
  Every $X\in\sub_k(\fixpol)$ carries (as a substructure of \fixpol) the 
  geometry of a $k$-dimensional projective space.
\item\label{localAff}
  Every $A\in\sub_k(\Afpolar)$ carries (as a substructure of \Afpolar)
  the geometry of a $k$-dimensional affine space.
  In particular, it contains a natural parallelism of lines, 
  which is the restriction of the
  parallelism $\parallel$ defined on \Afpolar\ to the set of lines of $A$.
\end{enumerate}
The property \eqref{localProj} can be considered as a characteristic axiom of polar
spaces; similarly, \eqref{localAff} can be considered as a characteristic axiom
of affine polar spaces.
Many of our results concerning classification of 
cliques can be proved synthetically under assumption that the
underlying space is a $\Gamma$-space which satisfies \eqref{localAff}.

For a subspace $A$ of $\Afpolar$ the least subspace of \fixpol\ that contains $A$ 
is written as $\overline{A}$. Then $A = \overline{A}\setminus\horyzont$.
We write $A^\infty := \overline{A}\cap\horyzont$.
Note that if $A\in\sub_k(\Afpolar)$ then $A^\infty\in\sub_{k-1}(\fixpol)$.
It is seen that 
$A^\infty = \left\{ l^\infty\colon l\in\izolines,\;l\subset A \right\}$.
So, we can extend the parallelism of lines to the parallelism of subspaces
$A_1, A_2$, which we write as $A_1\parallel A_2$, by requirement that 
$A_1^\infty = A_2^\infty$.
Note that for 
$\sub(\Afpolar)\ni A_1,A_2 \subset A\in\sub(\Afpolar)$ we have
$A_1^\infty = A_2^\infty$ 
iff $A_1,A_2$ are parallel subspaces of the affine space $A$.

If $X, Y$ are  subspaces of $\fixpol$ we denote by $X\sqcup Y$ the \emph{join}
of $X$ and $Y$ in $\fixpol$, i.e. the least subspace of $\fixpol$ that 
contains $X\cup Y$;
the \emph{meet} of $X$ and $Y$, is simply $X\cap Y$.
For $X, Y\in\sub(\fixpol)$ we have always $X\cap Y\in\sub(\fixpol)$
while $X\sqcup Y\in\sub(\fixpol)$ iff $X\perp Y$.
This paragraph remains true if we replace $\fixpol$ by $\Afpolar$.

Three types of adjacencies on $\sub_k(\AfPolar)$
will be investigated in the sequel:
\begin{eqnarray*}
  A_1 \downadjac A_2 & :\iff & A_1 \cap A_2 \in \sub_{k-1}(\AfPolar),
  \\
  A_1 \upadjac A_2 & :\iff & A_1 \sqcup A_2 \in \sub_{k+1}(\AfPolar),
  \\
  A_1 \badjac A_2 & :\iff & A_1 \downadjac A_2 \Land A_1 \upadjac A_2,
\end{eqnarray*}
These adjacencies may degenerate on $\sub_k(\AfPolar)$:
$\downadjac$ is total for $k=0$ and $\upadjac$ is empty for $k=m$;
in other cases they do not degenerate.
Consequently, 
\begin{ctext}\em
  dealing with $\downadjac$ we assume that $k > 0$, and dealing with $\upadjac$
  we assume that $k < m$. 
\end{ctext}

Let us recall after \cite{polargras} two other adjacencies defined on $\sub_k(\fixpol)$:
\begin{eqnarray*}
  X_1 \afbotadjac X_2 & :\iff & X_1 \cap X_2 \in \sub_{k-1}(\fixpol),
  \\
  X_1 \afupadjac X_2 & :\iff & X_1 \sqcup X_2 \in \sub_{k+1}(\fixpol).
\end{eqnarray*}
From $X_1 \afupadjac X_2$ it follows $X_1 \afbotadjac X_2$, but the converse
implication fails.
Note the following evident but useful relations
($A_1,A_2 \in \sub_k(\AfPolar)$):
\begin{eqnarray}
  \label{zal:down}
  A_1 \downadjac A_2 & \implies & \overline{A_1} \afdownadjac \overline{A_2},
  \\
  \label{zal:up}
  A_1 \upadjac A_2 & \iff & \overline{A_1} \afupadjac \overline{A_2},
  \\
  \overline{A_1}\afbotadjac \overline{A_2} & \iff & 
  A_1 \downadjac A_2 \text{ or } A_1 \parallel A_2. 
\end{eqnarray}
The maximal cliques of the two adjacency relations $\afupadjac$ and
$\afbotadjac$  in the polar space $\fixpol$ has been established in
\cite{polargras}. We give the complete list here as it is needed to get the
cliques in our affine polar space  $\AfPolar$.
\begin{fact}[{\cite[Prop. 3.3, Prop. 3.4]{polargras}}]\label{fct:polkliki}
  Let\/ $\K$ be a subset of\/ $\sub_k(\fixpol)$.
  \leftmargini15pt
  \begin{itemize}
  \item
    $\K$ is a maximal $\afupadjac$-clique iff it has one of the following two forms:
    \begin{enumerate}[\rm(a)]\itemsep-0pt
    \item\label{polklik:typ1}
      $\K = \topof(B) = \left\{ U\in\sub_k(\fixpol)\colon U \subset B \right\}$,
      where $B\in\sub_{k+1}(\fixpol)$, or 
    \item\label{polklik:typ2}
      $\K = [C,M] = \left\{ U\in\sub_k(\fixpol)\colon C \subset U \subset M \right\}$,
      where $C \in\sub_{k-1}(\fixpol)$ and $C\subset M\in\sub_m(\fixpol)$.
    \end{enumerate}
  
  \item
    $\K$ is a maximal $\afbotadjac$-clique iff it has one of the following two forms:
    \begin{enumerate}[\rm(a)]\itemsep-0pt
    \item
      as above, or 
    \setcounter{enumi}{2}
    \item\label{polklik:typ3}
      $\K = \starof(C) = \left\{ U\in\sub_k(\fixpol) \colon C\subset U \right\}$,
      where $C$ is as in \eqref{polklik:typ2}.
    \end{enumerate}
  \end{itemize}    
\end{fact}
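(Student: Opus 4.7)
The plan is to handle both adjacency relations in parallel, establishing first that each displayed set is a clique and then that no clique can properly extend it.

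\medskip

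First, I would check that the listed sets are cliques by pure dimension counting inside the local projective structure on each $M \in \sub_m(\fixpol)$. For $\topof(B)$, any two distinct $U_1, U_2 \in \topof(B)$ are hyperplanes of the strong $(k{+}1)$-dimensional $B$, hence $U_1 \sqcup U_2 = B \in \sub_{k+1}(\fixpol)$; so this is a $\afupadjac$-clique and therefore also a $\afbotadjac$-clique. For $[C,M]$, distinct elements meet in $C$ and span inside the strong $M$, so this is simultaneously a $\afupadjac$-clique and a $\afbotadjac$-clique. For $\starof(C)$, any two distinct elements meet in $C$ (a $(k{-}1)$-subspace), giving a $\afbotadjac$-clique (but not in general a $\afupadjac$-clique, since elements of a star need not be mutually collinear).

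\medskip

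For the uniqueness direction in the $\afupadjac$-case, take a maximal clique $\K$ and pick two distinct $U_1,U_2 \in \K$; put $B := U_1 \sqcup U_2 \in \sub_{k+1}(\fixpol)$ and $C := U_1 \cap U_2 \in \sub_{k-1}(\fixpol)$. For an arbitrary $U \in \K$, the perpendicularity $U \perp U_1, U_2$ extends to $U \perp B$, so $U \sqcup B$ is strong. Writing $D_i := U \cap U_i$ (both of dimension $k{-}1$) and arguing inside the projective geometry of $U$, I would show the dichotomy: either $D_1 = D_2$, which forces this common subspace to equal $C$ and hence $C \subset U$; or $D_1 \neq D_2$, in which case $D_1 \sqcup D_2 = U$ and therefore $U \subset B$. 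A second case split on $\K$ itself then shows that the two alternatives cannot coexist: if some $U \in \K$ had $C \not\subset U$ and $U \subset B$, and simultaneously some $V \in \K$ had $C \subset V$ but $V \not\subset B$, then $U \sqcup C = B$ (since $C, U$ are distinct hyperplanes of $B$) forces $V \sqcup U \supset B$ while $V \sqcup U \in \sub_{k+1}(\fixpol)$, contradicting $V \not\subset B$. Hence $\K \subset \topof(B)$, yielding case~(a), or $\K$ is contained in some $\starof(C)$ all of whose elements are mutually collinear; the latter means $\bigsqcup \K$ is strong, hence contained in a maximal $M \in \sub_m(\fixpol)$, and maximality of $\K$ closes the argument, yielding case~(b).

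\medskip

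For $\afbotadjac$, I would run the same analysis, again setting $C := U_1 \cap U_2$, but noting that now $U_1 \not\perp U_2$ is possible. If some pair $U_1, U_2 \in \K$ happens to be $\perp$, setting $B := U_1 \sqcup U_2$ lets the $\afupadjac$-argument go through verbatim, producing case~(a) or~(b). Otherwise every pair in $\K$ is non-collinear. A careful look at $D_1 = U \cap U_1$ and $D_2 = U \cap U_2$ for a third $U \in \K$ shows that $D_1 \neq D_2$ would entail $D_1 \sqcup D_2 = U$ and, after tracing the hyperplanes of $U_1, U_2$, force $U_1 \perp U_2$, contradicting our assumption; hence $D_1 = D_2 = C$, giving $C \subset U$, so $\K \subset \starof(C)$ and maximality yields case~(c).

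\medskip

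The main obstacle is the ``mixed'' case in step two: ruling out that a maximal $\afupadjac$-clique contains both an element that sits in $B$ but misses $C$ and an element that contains $C$ but escapes $B$. This requires the exact dimensional bookkeeping inside $B$ together with the fact that $\afupadjac$ forces both spanning and meeting dimensions to match. Everything else reduces to routine computations inside the projective geometry of strong subspaces and application of the hypothesis that $\fixpol$ is a polar space.
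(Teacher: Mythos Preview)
The paper does not prove this statement at all: it is stated as a \emph{Fact} and attributed to Propositions~3.3 and~3.4 of \cite{polargras}, with no argument given. So there is no paper-proof to compare against, and your write-up is already more than what the present paper supplies.

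That said, your $\afbotadjac$-argument has a real gap. When some pair $U_1,U_2\in\K$ is collinear and you set $B=U_1\sqcup U_2$, you claim the $\afupadjac$-argument ``goes through verbatim.'' The dichotomy step does carry over, but your elimination of the mixed case does \emph{not}: there you used that $V\sqcup U\in\sub_{k+1}(\fixpol)$, which follows from $V\afupadjac U$ but is not available when you only know $V\afbotadjac U$. You can repair this by working with meets instead of joins: from $C\subset V$, $V\cap U\in\sub_{k-1}(\fixpol)$, and $C\not\subset U$ one gets $C\neq V\cap U$; these are two distinct hyperplanes of $V$, both contained in $B$, whence $V=(V\cap U)\sqcup C\subset B$, a contradiction. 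Also, in this branch the conclusion should be ``case~(a) or case~(c)'' (by maximality inside $\starof(C)$), not ``(a) or~(b)'': the interval $[C,M]$ is never a \emph{maximal} $\afbotadjac$-clique.

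Finally, in the ``no collinear pair'' branch your phrase ``after tracing the hyperplanes of $U_1,U_2$, force $U_1\perp U_2$'' hides the only nontrivial step. One needs to observe that $D_1\neq D_2$ forces $D_i\neq C$ (else $U_3$ would coincide with one of $U_1,U_2$), hence $U_1=C\sqcup D_1$, $U_2=C\sqcup D_2$; and since $C,D_1,D_2$ are pairwise $\perp$ (each pair lies in one of the strong $U_i$), the subspace property of $(\cdot)^\perp$ gives $U_1\perp U_2$. This deserves to be spelled out.
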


In view of \eqref{zal:down} and \eqref{zal:up}, if $\K\subset \sub_k(\AfPolar)$ 
is a maximal $\downadjac$-clique (a maximal $\upadjac$-clique)
then the family 
\begin{ctext}
  $\widetilde{\K} := \bigl\{ \overline{A}\colon A \in\K \bigr\}$
\end{ctext}
is a $\afbotadjac$-clique (a $\afupadjac$-clique resp.),
whose extension (under suitable dimensional assumptions) to a respective 
maximal clique $\overline{\K}$ is unique.
In other words we can simply say that the maximal cliques in $\AfPolar$
are reducts of respective cliques in the polar space $\fixpol$.
With the help of \ref{fct:polkliki} 
we obtain the following list of possible forms
of maximal cliques on $\sub_k(\AfPolar)$.

\begin{prop}\label{prop:afkliki}
  Let\/ $\K$ be a subset of\/ $\sub_k(\AfPolar)$.
  \leftmargini15pt
  \begin{itemize}
  \item
    A maximal $\upadjac$-clique $\K$ may have one of the following forms:
    \begin{enumerate}[\rm(a)]\setcounter{enumi}{3}
    \item\label{afklik:typ1}
      $\K = \topof(B) = \left\{ A\in\sub_k(\AfPolar)\colon A \subset B \right\}$, 
      where $B \in\sub_{k+1}(\AfPolar)$.
      Then 
      $\widetilde{\K} \subset \topof(\overline{B})$.
    \item\label{afklik:typ2}
      $\K = [C,M] = \left\{ A \in \sub_k(\AfPolar)\colon C \subset A \subset M \right\}$,
      where $C \in \sub_{k-1}(\AfPolar)$ and $C \subset M \in\sub_m(\AfPolar)$.
      Then
      $\widetilde{\K} \subset [\overline{C},\overline{M}]$.
    \item\label{afklik:typ3}
      $\K = [A_0,M]^\ast = 
      \left\{ A \in \sub_k(\AfPolar)\colon A_0 \parallel A \subset M \right\}$,
      where $A_0 \in \sub_{k}(\AfPolar)$ and $A_0 \subset M \in\sub_m(\AfPolar)$.
      Then
      $\widetilde{\K} \subset [A_0^\infty,\overline{M}]$.
    \end{enumerate}
    A set of the form \eqref{afklik:typ1}--\eqref{afklik:typ3} is a $\upadjac$-clique;
    in case \eqref{afklik:typ1} it is not maximal iff\/ $k=0$ and $m > 1$,
    while in cases \eqref{afklik:typ2}
    and \eqref{afklik:typ3} it is not maximal iff\/ $0 < k = m-1$.

  \item
    A maximal $\downadjac$-clique $\K$ may have one of the following forms:  
    \begin{enumerate}[\rm(a)]\setcounter{enumi}{6}
    \item\label{afklik:typ4}
      $\K \subset \topof(B)$ with $\topof(B)$ defined as in \eqref{afklik:typ1}
      is a selector of $B^\infty$, i.e. for every $A\in\topof(B)$ there is exactly
      one $A'\in\K$ with $A'\parallel A$.
    \item\label{afklik:typ5}
      $\K = \starof(C) = \left\{ A\in\sub_k(\AfPolar)\colon C\subset A \right\}$,
      where $C\in\sub_{k-1}(\AfPolar)$.
      Then
      $\widetilde{\K} = \starof(\overline{C})$.
    \end{enumerate}
    A set of the form \eqref{afklik:typ4} does not exist when $k=m$, otherwise
    sets of both types are maximal $\downadjac$-cliques.

  \item
    A maximal $\badjac$-clique may have the form \eqref{afklik:typ2} or 
    \eqref{afklik:typ4}.
  \end{itemize}
\end{prop}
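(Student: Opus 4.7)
The proof is a systematic translation of Fact~\ref{fct:polkliki} to the affine setting via the bridge relations \eqref{zal:down}, \eqref{zal:up} and the third displayed equivalence. Given a maximal clique $\K\subset\sub_k(\AfPolar)$ for one of the three adjacencies, the closure $\widetilde{\K}=\bigl\{\overline{A}\colon A\in\K\bigr\}$ is a clique in $\fixpol$ for the corresponding polar adjacency ($\afupadjac$ for $\upadjac$, and $\afbotadjac$ for $\downadjac$ and $\badjac$), hence by Fact~\ref{fct:polkliki} extends to a maximal polar clique of type $\topof(B_0)$, $[C_0,M_0]$ or $\starof(C_0)$. The task is to unpack what each such polar form becomes in $\AfPolar$, with subcases distinguished by whether the defining parameters $B_0,C_0,M_0$ sit inside the improper hyperplane $\horyzont$.

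For $\upadjac$, the possibilities reduce to $\topof(B_0)$ and $[C_0,M_0]$. In the former, $B_0\not\subset\horyzont$ (otherwise $\K$ is empty) and one recovers \eqref{afklik:typ1} with $B=B_0\setminus\horyzont$. In the latter, $M_0\not\subset\horyzont$; if in addition $C_0\not\subset\horyzont$ one obtains \eqref{afklik:typ2}, while if $C_0\subset\horyzont$ then every admissible $U\in[C_0,M_0]$ satisfies $U\cap\horyzont=C_0$ (since $\horyzont\cap U$ is a hyperplane of the projective space $U$ containing $C_0$, and both have dimension $k-1$); consequently all elements $A\in\K$ share the same flat at infinity $A^\infty=C_0$ and are pairwise parallel, so fixing any $A_0\in\K$ yields \eqref{afklik:typ3}.

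For $\downadjac$ the extension is $\topof(B_0)$ or $\starof(C_0)$. Inside $\topof(B_0)$, within the affine space $B=B_0\setminus\horyzont$ any two distinct hyperplanes are either parallel or meet in codimension one; the $\downadjac$-condition rules out parallelism, so $\K$ is a selector of parallel classes of hyperplanes of $B$, giving \eqref{afklik:typ4}. Inside $\starof(C_0)$, $C_0\subset\horyzont$ would force every pair of elements of $\K$ to intersect inside $\horyzont$ and hence to be parallel rather than $\downadjac$-related, leaving $|\K|\leq 1$ and no maximal clique; $C_0\not\subset\horyzont$ on the other hand gives \eqref{afklik:typ5}. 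For $\badjac$ one needs $\widetilde{\K}$ inside a polar clique that is simultaneously $\afupadjac$- and $\afbotadjac$-maximal, which excludes $\starof$, and within $[C_0,M_0]$ rules out $C_0\subset\horyzont$ (parallel distinct subspaces fail $\downadjac$); the two remaining cases are precisely \eqref{afklik:typ2} and \eqref{afklik:typ4}.

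That each listed set is itself a clique is a direct verification inside the ambient affine space. For maximality, an adjoinable external $A'$ would give a larger clique whose polar closure still extends to a maximal polar clique; under the dimensional assumptions this extension is unique, forcing $\overline{A'}$ to lie in $\overline{\K}$ and hence $A'\in\K$ after restriction to $\AfPolar$. The listed exceptions correspond exactly to failures of this uniqueness: $k=0$ with $m>1$ prevents a line from being a maximal singular subspace, breaking \eqref{afklik:typ1}; $0<k=m-1$ lets $\topof(\overline{M})$ absorb $[\overline{C},\overline{M}]$ and its parallel-class affine variant, breaking \eqref{afklik:typ2} and \eqref{afklik:typ3}; and $k=m$ empties the construction of \eqref{afklik:typ4}. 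The hardest bookkeeping is the selector case \eqref{afklik:typ4}, where the polar-uniqueness argument must be combined with the selector property to rule out simultaneously that $\overline{A'}$ lies outside $\overline{B}$ and that $A'$ sits in an already-represented parallel class.
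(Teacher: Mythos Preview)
Your proof is correct and follows the same route as the paper: lift $\K$ to $\widetilde{\K}$, invoke Fact~\ref{fct:polkliki} to place it inside $\topof(B_0)$, $[C_0,M_0]$, or $\starof(C_0)$, then case-split on whether the defining data lie in $\horyzont$ and restrict back to $\AfPolar$. The only imprecision is in the $\badjac$ case, where the phrase ``simultaneously $\afupadjac$- and $\afbotadjac$-maximal'' is misleading (the interval $[C_0,M_0]$ is \emph{not} $\afbotadjac$-maximal); what you actually need---and what the paper uses---is simply that a $\badjac$-clique is in particular a $\upadjac$-clique, so $\widetilde{\K}$ sits in some $\afupadjac$-maximal clique (hence in a $\topof$ or a $[C,M]$, never merely in a $\starof$), after which the $\downadjac$ condition prunes the options exactly as you describe.
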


\begin{proof}
  Let $\K$ be a maximal $\upadjac$-clique in $\AfPolar$. By \eqref{zal:up}, 
  $\widetilde{\K}$ is a $\afupadjac$-clique. So, in view of \ref{fct:polkliki}
  two possibilities arise: 
  $\widetilde{\K}\subseteq\topof(B)$ for some $B\in\sub_{k+1}(\fixpol)$,
  or
  $\widetilde{\K}\subseteq[C,M]$ for some $C\in\sub_{k-1}(\fixpol)$ and $M\in\sub_m(\fixpol)$
  with $C\subset M$.
  In the first case $B\not\subset\horyzont$ and therefore $\K$ is the maximal  $\upadjac$-clique
  of hyperplanes in the affine space $B\setminus\horyzont$. Thus $\K=\topof(B\setminus\horyzont)$.
  In the second case $M\not\subset\horyzont$, so $M\setminus\horyzont$ is an affine space
  that contains $\K$ and $M\cap\horyzont$ is its horizon. If $C\not\subset\horyzont$, then
  $\K$ is of the form \eqref{afklik:typ2}, otherwise $\K$ is of the form \eqref{afklik:typ3}.
  
  Now, let $\K$ be a maximal $\downadjac$-clique in $\AfPolar$. By \eqref{zal:down},
  $\widetilde{\K}$ is a $\afdownadjac$-clique. Again by  \ref{fct:polkliki} we have
  either 
  $\widetilde{\K}\subseteq\topof(B)$ for some $B\in\sub_{k+1}(\fixpol)$,
  or
  $\widetilde{\K}\subseteq\starof(C)$ for some $C\in\sub_{k-1}(\fixpol)$.
  In the first case $B\not\subset\horyzont$ and $\K$ is a $\downadjac$-clique
  of hyperplanes in the affine space $B\setminus\horyzont$, so no two elements of $\K$
  are parallel. Since $\K$ is maximal there is a hyperplane in $\K$ in every hyperplane 
  direction of $B\setminus\horyzont$. Hence $\K$ has form \eqref{afklik:typ4}.
  In the second case $C\not\subset\horyzont$ as otherwise we would have parallel
  elements in $\K$ which is impossible. Thus $\K\subseteq\starof(C\setminus\horyzont)$.
  Since $\K$ is maximal we get $\K = \starof(C\setminus\horyzont)$.
  
  If $\K$ is a maximal $\badjac$-clique in $\AfPolar$, then it is a clique
  with respect to both $\upadjac$ and $\downadjac$, so it is of the
  form \eqref{afklik:typ2} or \eqref{afklik:typ4}.
  
  If a subset $\K$ of $\sub_k(\AfPolar)$ is of one the forms 
  \eqref{afklik:typ1} - \eqref{afklik:typ5},
  then it is evidently a clique of a corresponding adjacency. It is maximal
  iff $\widetilde{\K}$ is maximal.
\end{proof}

We denote classes of the sets of the form ``$\K = \ldots$'' 
introduced above as follows:
\begin{ctext}
  \eqref{afklik:typ1} -- $\tops$,\qquad
  \eqref{afklik:typ2} -- $\starsm$,\qquad
  \eqref{afklik:typ3} -- $\starsx$,\qquad
  \eqref{afklik:typ4} -- $\topsx$,\qquad
  \eqref{afklik:typ5} -- $\stars$.
\end{ctext}

A polar space $\fixpol$ determines a partial linear space 
$\PencSpace(k,\fixpol)$ 
(cf. \cite{Cohen}, \cite{polargras})
with the point set $\sub_k(\fixpol)$ and with the
line set $\peki_k(\fixpol)$ consisting of the $k$-pencils, i.e. with the sets
\begin{ctext}
  $\penc(C,B) = \left\{ X\in\sub_k(\fixpol)\colon C \subset X \subset B \right\}$,
  \\
  where $C\in\sub_{k-1}(\fixpol)$ and $C\subset B\in\sub_{k+1}(\fixpol)$.
\end{ctext}
Accordingly, we define on the set $\sub_k(\AfPolar)$ two structures of 
a partial linear space:
\begin{ctext}
  $\PencSpace(k,\AfPolar) = \struct{\sub_k(\AfPolar),\peki_k(\AfPolar)}$ 
  \quad and \quad 
  $\PencSpaced(k,\AfPolar) = \struct{\sub_k(\AfPolar),\pekid_k(\AfPolar)}$,
\end{ctext}
where
$\peki_k(\AfPolar)$ consists of the pencils $\penc(C,B)$ with $C\in\sub_{k-1}(\AfPolar)$
and $C \subset B \in\sub_{k+1}(\AfPolar)$, defined analogously as pencils over $\fixpol$,
and
$\pekid_k(\AfPolar)$ consists of the nonvoid sets
$p\vert_{-\horyzont} = \left\{ X\setminus \horyzont \colon X \in p, \; 
X\not\subset \horyzont \right\}$
with $p\in\peki_k(\fixpol)$.
Note that for 
$C\in\sub_{k-1}(\AfPolar)$ and $C \subset B \in\sub_{k+1}(\AfPolar)$ 
we have 
$\penc(C,B) = \penc(\overline{C},\overline{B})\vert_{-\horyzont}$
and 
$\widetilde{\penc(C,B)} = \penc(\overline{C},\overline{B})$.
Consequently,
$\peki_k(\AfPolar)\subset\pekid_k(\AfPolar)$.
Let $A_0\in\sub_k(\AfPolar)$ and $A_0 \subset B\in\sub_{k+1}(\AfPolar)$;
we set 
$\pencx(A_0,B) = \left\{ A\in\sub_k(\AfPolar)\colon A_0\parallel A \subset B \right\}$.
Let $\pekix_k(\AfPolar)$ be the class of all the sets of the form $\pencx(A_0,B)$.
It is seen that 
$\pencx(A_0,B) = \penc(A_0^\infty,\overline{B})\vert_{-\horyzont}$ 
and
$\widetilde{\pencx(A_0,B)} = \penc(A_0^\infty,\overline{B}) \setminus\{B^\infty\}$.
Finally, we have
$\pekid_k(\AfPolar) = \peki_k(\AfPolar) \cup \pekix_k(\AfPolar)$.
Structures $\PencSpace(k,\Afpolar)$ and $\PencSpaced(k,\Afpolar)$
are usually referred to as {\em spaces of $k$-pencils} or \emph{Grassmann spaces over} $\Afpolar$.
In terms of these structures we can say that
\begin{ctext}\em
  $\upadjac$ on $\sub_k(\AfPolar)$ is the binary collinearity in
  $\PencSpaced(k,\AfPolar)$, and
  \\
  $\badjac$ on $\sub_k(\AfPolar)$ is the binary collinearity in $\PencSpace(k,\AfPolar)$.
\end{ctext}
Notice that the above definitions of pencils require that $k < m$ and we will assume that
implicitly when referring to either $\PencSpace(k,\Afpolar)$ or $\PencSpaced(k,\Afpolar)$.
For $k=0$ we have $\Afpolar \cong \PencSpace(0,\Afpolar) = \PencSpaced(0,\Afpolar)$.

One could note a similarity of the above construction to the, analogous, construction
of the space of pencils $\PencSpace(k,{\goth A})$ associated with an affine space $\goth A$
(cf. definitions in \cite{tallini}, \cite{bichara}, or, in a more modern paper \cite{dentice}).

Let us point out an evident but useful consequence of \eqref{localAff}.

\begin{fact}\label{localAffpek}
  Let $D\in\sub(\Afpolar)$, $\dim(D) > k$.
  Then the restriction of\/
  $\PencSpace(k,\Afpolar)$ to the segment 
  $[\emptyset,D] = \{A\in\sub(\Afpolar)\colon A\subset D\}$
  is $\PencSpace(k,D)$, and the restriction of\/ $\PencSpaced(k,\Afpolar)$
  is $\PencSpaced(k,D)$.
\end{fact}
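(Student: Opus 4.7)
The plan is to verify this fact essentially by unwinding the definitions, the one substantive input being property \eqref{localAff}. Since $\dim(D)>k$, by \eqref{localAff} the subspace $D$ carries the intrinsic structure of a $\dim(D)$-dimensional affine space, so $\PencSpace(k,D)$ and $\PencSpaced(k,D)$ are defined in the same way as over any affine polar space, but with $D$ itself in the role of $\Afpolar$. It then suffices to match pointsets and linesets on both sides.

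For the pointsets, if $A\in\sub_k(\Afpolar)$ and $A\subset D$, then $A$ is a $k$-dimensional affine subspace of $D$; conversely, every $k$-flat of $D$ is strong as a subset of the strong subspace $D$, and hence belongs to $\sub_k(\Afpolar)$. So $\sub_k(D)=\sub_k(\Afpolar)\cap[\emptyset,D]$. For the pencils of $\PencSpace$ I would note that $\bigcup\penc(C,B)=B$ (every point of $B$ outside $C$ lies on exactly one $k$-subspace through $C$ inside $B$, by the affine geometry of $B$), so $\penc(C,B)\subset[\emptyset,D]$ is equivalent to $B\subset D$; in that case $C\subset B\subset D$ and $\penc(C,B)$ coincides with the corresponding $(C,B)$-pencil inside $D$. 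Every pencil of $\PencSpace(k,D)$ visibly arises in this way, so the linesets agree.

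For $\PencSpaced$ one has additionally the parallel pencils $\pencx(A_0,B)\in\pekix_k(\Afpolar)$. Since $\pencx(A_0,B)$ partitions $B$ into parallel $k$-subspaces and contains $A_0$ itself, again $\pencx(A_0,B)\subset[\emptyset,D]$ iff $B\subset D$ (which forces $A_0\subset D$). The only point requiring a remark is that the parallelism $\parallel$ of $\Afpolar$, when restricted to subspaces of $D$, coincides with the parallelism intrinsic to the affine geometry of $D$; but this is precisely the equivalence $A_1^\infty=A_2^\infty$ iff $A_1,A_2$ are parallel subspaces of $A$, already recorded earlier in the paper. Once this identification is in place, $\pencx(A_0,B)$ becomes exactly the parallel $(A_0,B)$-pencil of $\PencSpaced(k,D)$, and equality of the two structures follows. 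The argument is thus a routine definition-chase powered by \eqref{localAff}; there is no serious obstacle.
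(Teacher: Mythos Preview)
Your proposal is correct and is precisely the definition-chase the paper has in mind: the paper gives no proof at all, merely declaring this Fact an ``evident but useful consequence of \eqref{localAff}'', and your argument is exactly that unpacking. The one small point worth tightening is the phrase ``partitions $B$ into parallel $k$-subspaces'' for $\pencx(A_0,B)$ --- strictly these are the hyperplanes of the $(k+1)$-dimensional affine space $B$ in a fixed direction, which indeed cover $B$ --- but the conclusion $\bigcup\pencx(A_0,B)=B$ (hence $B\subset D$) is correct.
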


One can say that `locally' $\PencSpace(k,\Afpolar)$ is an affine Grassmann space.

One more fact, which is a consequence (simple: verify axioms of a polar space) 
and an easy (formal) strengthening of \cite[Theorem~3.5]{polargras} is needed.
\begin{fact}\label{localPolar}
  Let $C\in\sub(\fixpol)$, $\dim(C) = k-1 < m$.
  Then the restriction of $\PencSpace(k,\fixpol)$ to the segment
  $[C,\Quadr] = \{ X\in\sub(\fixpol)\colon C\subset X \}$
  in $\PencSpace(k,\fixpol)$ is a polar space.
\end{fact}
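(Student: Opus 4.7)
The plan is to verify the Buekenhout--Shult polar space axioms directly for the substructure of $\PencSpace(k,\fixpol)$ induced on the segment $[C,\Quadr]$, and to reduce the decisive one-or-all axiom to the corresponding axiom in $\fixpol$ applied to a single point.

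First I would identify the induced lines: a pencil $\penc(C',B) \in \peki_k(\fixpol)$ lies in $[C,\Quadr]$ iff $C \subset X$ holds for every $X \in \penc(C',B)$, and since any such $X$ already contains $C'$ with $\dim C = \dim C' = k-1$, this forces $C' = C$. The lines of the restriction are therefore exactly the pencils $\penc(C,B)$ with $C \subsetneq B \in \sub_{k+1}(\fixpol)$, parametrised by $(k+1)$-subspaces of $\fixpol$ containing $C$. Two distinct points $X_1, X_2$ of the restriction lie on such a common pencil iff $X_1 \sqcup X_2$ is a strong $(k+1)$-subspace of $\fixpol$; as $X_1, X_2$ both contain $C$ and are distinct, $X_1 \cap X_2 = C$, so this reduces to $X_1 \perp X_2$ in $\fixpol$.

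The decisive step is the one-or-all axiom. Fix a line $L = \penc(C,B)$ and a point $X \in [C,\Quadr] \cap \sub_k(\fixpol)$ with $X \notin L$; since $X \not\subset B$ while $C \subset X \cap B$, the same dimension argument gives $X \cap B = C$. Pick any $\tilde x \in X \setminus C$; then $X^\perp = C^\perp \cap \tilde x^\perp$, and because $C \subset B$ with $B$ strong one has $B \subset C^\perp$, so
\[
  X^\perp \cap B \;=\; \tilde x^\perp \cap B.
\]
The standard one-or-all property of $\fixpol$ for a point and a strong subspace (which follows line-by-line inside the projective space $B$ via \eqref{localProj}) yields that $\tilde x^\perp \cap B$ is either all of $B$ or a hyperplane, i.e. a $k$-subspace. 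In the first case every $X' \in L$ satisfies $X' \subset B \subset X^\perp$, so $X$ is collinear in the restriction with all points of $L$; in the second case $X^\perp \cap B$ is itself the unique $k$-subspace of $\fixpol$ containing $C$ and contained in it, hence the unique element of $L$ perpendicular to $X$.

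The remaining polar-space requirements transfer from $\fixpol$: the assignment $X' \mapsto X'/C$ gives a bijection between the points of $\penc(C,B)$ and the points of the projective line $B/C$ by \eqref{localProj}, so lines inherit thickness from $\fixpol$; the hypothesis $k - 1 < m$ ensures that $[C,\Quadr] \cap \sub_k(\fixpol)$ is non-empty and carries non-perpendicular pairs, which gives non-degeneracy. The hard part will be the reduction in the one-or-all step; once $X^\perp \cap B = \tilde x^\perp \cap B$ has been recognised via $C \subset B$, the Buekenhout--Shult axiom in $\fixpol$ for the single point $\tilde x$ and the strong subspace $B$ finishes the argument.
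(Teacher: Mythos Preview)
Your argument is correct and follows exactly the route the paper indicates: the paper records this fact as a ``simple: verify axioms of a polar space'' strengthening of \cite[Theorem~3.5]{polargras}, and you carry out precisely that verification, with the one-or-all axiom reduced via $X^\perp\cap B=\tilde x^\perp\cap B$ to the Buekenhout--Shult axiom in $\fixpol$ for the single point $\tilde x$. Your write-up is more explicit than the paper's one-line justification, but the approach is the same.
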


\medskip
Our goal is as follows.
\begin{thm}\label{thm:main}
  If\/ $0 < k \leq m$ then for $\adjac = \downadjac$, while 
  if\/ $k \leq m - 1$ then for $\adjac \;\in \{ \upadjac, \badjac \}$
  the affine polar space $\AfPolar$ can be recovered from 
  $\struct{\sub_k(\AfPolar), \adjac}$.
\end{thm}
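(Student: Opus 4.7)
The plan is a descending induction on $k$, carried out in parallel for the three adjacencies, along the Chow-like template that served in \cite{polargras} and \cite{afpolar}. The input at each stage is the bare graph $\struct{\sub_k(\AfPolar), \adjac}$; the goal is to recover enough structure at level $k$ to descend to level $k-1$, and ultimately to reach $k = 0$, where $\sub_0(\AfPolar)$ is the point set of $\AfPolar$ and the lineset is reassembled from the pencils recovered at higher levels.

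First I would take Proposition~\ref{prop:afkliki} as an inventory and give each relevant clique type an intrinsic graph-theoretic characterisation. The separation relies on the combinatorics of pairwise intersections of maximal cliques: distinct stars share at most one element, two distinct tops cannot share two, $\topsx$-cliques contain no two parallel members, $\starsm$-cliques are composed of pairwise non-parallel $k$-subspaces sitting inside a common maximal singular, and $\starsx$-cliques are composed of pairwise parallel ones. Parallelism of $k$-subspaces must itself be defined from the graph, which I plan to do by letting $A_1 \parallel A_2$ iff some maximal $\upadjac$-clique contains both but no common upper bound in $\sub_{k+1}(\AfPolar)$ exists -- equivalently iff they sit together in a $\starsx$-clique -- and, for $\downadjac$, via the pattern witnessed by $\topsx$-selectors.

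With the clique types named, the pencils are recovered as intersections: for $\badjac$ every pencil $\penc(C,B) = \starof(C) \cap \topof(B)$ is the intersection of a $\starsm$-clique and a $\topsx$-clique that contain it; for $\upadjac$ the lines of $\PencSpaced(k,\AfPolar)$ arise as analogous intersections of a $\tops$-clique with either a $\starsm$- or a $\starsx$-clique; for $\downadjac$ one uses intersections of $\stars$-cliques with $\topsx$-cliques. By Fact~\ref{localAffpek} this restores the affine geometry inside every strong subspace, and by Fact~\ref{localPolar} the polar-type geometry on the complementary side. The induction step then identifies each $C \in \sub_{k-1}(\AfPolar)$ with its star $\starof(C) \in \stars$ and translates the three adjacencies at level $k-1$ into graph-definable patterns at level $k$ -- for instance $C_1 \upadjac C_2$ at level $k-1$ iff $\starof(C_1) \cap \starof(C_2)$ is a singleton in $\sub_k(\AfPolar)$, and the rules for $\downadjac$, $\badjac$ at level $k-1$ are similarly definable. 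Iterating this descent brings $k$ down to $0$ and returns $\AfPolar$.

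The main obstacle is Step~1 for the $\upadjac$ case: the family $\starsx$ is purely an affine artefact and has no analogue in the surrounding polar space $\fixpol$, so no direct translation from the treatment in \cite{polargras} is possible. I expect to handle this by first defining parallelism of $k$-subspaces from the graph alone and then characterising $\starsx$-cliques as the maximal parallel sub-cliques of $\upadjac$-cliques that are neither of type $\tops$ nor of type $\starsm$. A secondary nuisance will be the non-maximality boundary cases flagged in Proposition~\ref{prop:afkliki} (when $k = 0$ or $k = m-1$); these must be treated by separate hand-arguments but do not affect the overall shape of the induction.
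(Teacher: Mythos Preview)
Your overall architecture matches the paper's: descend on $k$ by identifying $C\in\sub_{k-1}(\AfPolar)$ with its star, reconstruct the pencil space via clique intersections, and handle $k=m,\,m-1$ separately. The induction step you sketch (with $C_1\upadjac C_2$ iff $\starof(C_1)\cap\starof(C_2)$ is nonempty) is exactly what the paper does.

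However, your Step~1 contains a real circularity. You propose to define $A_1\parallel A_2$ as ``both lie in a maximal $\upadjac$-clique but have no common upper bound in $\sub_{k+1}(\AfPolar)$'', and then to use parallelism to isolate $\starsx$. But knowing $\sub_{k+1}(\AfPolar)$ from the graph already requires having identified the cliques of type $\tops$, and you intend to separate $\tops$ from $\starsm\cup\starsx$ only \emph{after} parallelism is available. The ``intersection combinatorics'' you mention will not rescue this: two distinct cliques in $\starsm$ with the same vertex $C$ can share many elements (namely $[C,M_1\cap M_2]$), so intersection size does not single out $\tops$. The paper breaks this loop differently. For $\upadjac$ with $k<m-1$ it defines the ternary collinearity $\colind$ of $\PencSpaced(k,\AfPolar)$ \emph{directly} from the binary relation via
\[
  \colind(A_1,A_2,A_3)\iff(\forall A)\bigl[\,A\upadjac A_1,A_2\implies A\upadjac A_3\,\bigr],
\]
without first sorting cliques (Lemma~\ref{lem:linesx}); the three clique types are then told apart by the \emph{intrinsic geometry} they carry as subspaces of $\PencSpaced(k,\AfPolar)$ (dual affine, projective, affine, respectively). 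For $\downadjac$ and $\badjac$ the paper uses a replaceability trick (Lemma~\ref{lem:difcliq-da-ba}): an element of a $\topsx$-selector can always be swapped for a parallel copy and still yield a maximal clique, whereas this is impossible in $\stars$ or $\starsm$. Neither device needs parallelism as input.

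A second point: the cases $k=m-1$ and $k=m$ are not mere edge effects. There the $\starsm$-cliques degenerate to lines of $\PencSpace(k,\AfPolar)$, the related-stars argument for $g$ collapses, and the paper introduces an auxiliary relation $\afbotadjacx$ (eq.~\eqref{def:afbotadjacx}) and a separate proposition to recover $\GrasSpace(m-1,\AfPolar)$. Your plan should allocate real work here rather than defer it as a nuisance.
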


The case where $m = 1$ is excluded in the Cohen-Shult axiom system in
\cite{cohenshult}  but it is covered by both constructive approaches: the one in
\cite{afpolar} and  the other in \cite{cohenshult} consisting in hyperplane
removal. 

Note that $\PencSpace(0,\Afpolar) = \PencSpaced(0,\Afpolar)$ is,
up to an isomorphism, the affine polar space $\Afpolar$ itself.
Hence the following is immediate by \ref{thm:main}.

\begin{cor}\label{cor:main}
  $\AfPolar$ is definable in both\/ $\PencSpace(k,\Afpolar)$
  and\/ $\PencSpaced(k,\Afpolar)$, provided that $k < m$.
\end{cor}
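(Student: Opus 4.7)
My plan follows the standard Chow-style reasoning outlined in the introduction, split into (a) recovery of the appropriate Grassmann space $\PencSpace(k,\AfPolar)$ or $\PencSpaced(k,\AfPolar)$ from the bare adjacency, (b) identification of stars with $(k-1)$-subspaces, and (c) induction on $k$ down to $k=0$, where $\PencSpace(0,\AfPolar)\cong\AfPolar$. Step (a) rests on Proposition~\ref{prop:afkliki}, which classifies the maximal $\adjac$-cliques into three families $\tops,\starsm,\starsx$ (for $\adjac=\upadjac$), two families $\topsx,\stars$ (for $\adjac=\downadjac$), or two families $\starsm,\topsx$ (for $\adjac=\badjac$). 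The first task is to separate these families intrinsically, using invariants such as the cardinalities and combinatorial types of pairwise clique intersections: two $\stars$-cliques that share an element share a whole pencil, two $\topsx$-cliques in a common $\tops$ share only isolated elements, and a $\starsx$-clique is singled out by the fact that its elements are pairwise parallel.

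Once the families are separated, the pencils $\penc(C,B)\in\peki_k(\AfPolar)$ are to be recovered as intersections $\starsm\cap\tops$, and the parallel pencils $\pencx(A_0,B)\in\pekix_k(\AfPolar)$ as intersections $\starsx\cap\tops$; this yields either $\PencSpace(k,\AfPolar)$ or $\PencSpaced(k,\AfPolar)$, according to the case. Fact~\ref{localAffpek} then lets me recognize, inside every top $\topof(B)$, the affine Grassmann geometry of hyperplanes of the affine $(k+1)$-space $B$, and in particular the restriction of the parallelism $\parallel$; gluing across tops recovers the global parallelism on $\sub_k(\AfPolar)$. In the $\downadjac$ case the tops themselves are not cliques, but the family of $\topsx$-selectors inside a common $B$ is definable, and two parallel elements of $B$ are detectable as those never selected together by a $\topsx$.

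For step (b), each $C\in\sub_{k-1}(\AfPolar)$ is the unique center of the star $\starof(C)\in\stars$, which yields a definable bijection $\sub_{k-1}(\AfPolar)\leftrightarrow\stars$. Using Fact~\ref{localPolar} to treat the local polar situation inside each star, I would express each of the three adjacencies at dimension $k-1$ in terms of the already-reconstructed data: $C_1\downadjac C_2$ iff $\starof(C_1)\cap\starof(C_2)$ is a single element of $\sub_k(\AfPolar)$, $C_1\upadjac C_2$ iff $\starof(C_1)\cup\starof(C_2)$ lies inside a common maximal clique of the appropriate type, and $\badjac$ as their conjunction. This replaces $\struct{\sub_k(\AfPolar),\adjac}$ with $\struct{\sub_{k-1}(\AfPolar),\adjac}$, and $k$-fold iteration lands in $\PencSpace(0,\AfPolar)\cong\AfPolar$, yielding the theorem.

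The main obstacle will be the intrinsic separation of clique types in the $\adjac=\downadjac$ case: the $\topsx$-cliques form a large, loosely parametrised family (one per selector of $B^\infty$ in every $(k+1)$-strong subspace $B$), and distinguishing them from $\stars$ while simultaneously recovering the hidden tops $\topof(B)$ from the selector pattern demands careful bookkeeping. The degenerate dimensional cases flagged in Theorem~\ref{thm:main} and after Proposition~\ref{prop:afkliki} (notably $k=1$, $k=m-1$, $k=m$) should be verified separately, but each reduces to a mild variant of the main argument, along the lines already used in \cite{polargras} and \cite{afpolar}.
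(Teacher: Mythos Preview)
You are working far harder than necessary. The corollary is placed immediately after Theorem~\ref{thm:main} because it \emph{follows from it in one line}: by definition, $\badjac$ is the binary collinearity relation of $\PencSpace(k,\AfPolar)$ and $\upadjac$ is the binary collinearity of $\PencSpaced(k,\AfPolar)$, so each pencil space already determines the corresponding adjacency on $\sub_k(\AfPolar)$; Theorem~\ref{thm:main} then gives $\AfPolar$. Your plan instead re-derives the entire content of Theorem~\ref{thm:main} (clique classification, separation of types, induction on $k$), which you are entitled to cite. In particular your step~(a), ``recovery of the Grassmann space from the bare adjacency'', goes in the wrong direction for this corollary: here you \emph{start} from the Grassmann space.

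Even as an outline of Theorem~\ref{thm:main}, your induction step contains a concrete slip. For $C_1,C_2\in\sub_{k-1}(\AfPolar)$ one has $\starof(C_1)\cap\starof(C_2)\neq\emptyset$ (equivalently, this intersection is a single element, namely $C_1\sqcup C_2$) if and only if $C_1\upadjac C_2$, not $C_1\downadjac C_2$ as you wrote; and your proposed criterion ``$\starof(C_1)\cup\starof(C_2)$ lies inside a common maximal clique'' cannot hold for distinct $C_1,C_2$, since the $\starof(C_i)$ are themselves maximal $\downadjac$-cliques. The paper's induction descends only via $\upadjac$ (using exactly the non-emptiness of $\starof(C_1)\cap\starof(C_2)$, or the analogous condition on the images under $g$), and this suffices.
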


\section{The reasoning}

Provided that $k < m$, 
let us write $\colin(A_1,A_2,A_3)$ if $A_1,A_2,A_3$ are
collinear in  $\PencSpace(k,\Afpolar)$, and $\colind(A_1,A_2,A_3)$ if they are
collinear in $\PencSpaced(k,\Afpolar)$. 

We also introduce an auxiliary structure 
\begin{ctext}
  $\GrasSpace(k,\Afpolar) := \struct{\sub_k(\Afpolar),\sub_{k+1}(\Afpolar),\subset}$,
\end{ctext}
which is a partial linear space sometimes called {\em a Grassmannian over} $\Afpolar$.
Note that 
$\GrasSpace(0,\Afpolar)\cong\Afpolar\cong\PencSpace(0,\Afpolar) = \PencSpaced(0,\Afpolar)$.
Later, we will use the first isomorphism to show that $\Afpolar$ can be
reconstructed in terms of adjacency on $k$-subspaces.

\begin{fact}\label{fact:selectors0}
  Let $\K\in\topsx$.
  Then $|\bigcap\K|\leq 1$.
\end{fact}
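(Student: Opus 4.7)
The plan is to argue by contradiction using the affine structure of $B$ guaranteed by \eqref{localAff}. By Proposition~\ref{prop:afkliki}\eqref{afklik:typ4}, a $\K \in \topsx$ sits inside $\topof(B)$ for some $B \in \sub_{k+1}(\Afpolar)$ and picks exactly one representative from every parallelism class of hyperplanes of the affine space $B$. Equivalently, the map $A \mapsto A^\infty$ sends $\K$ bijectively onto the family of all hyperplanes (i.e.\ $(k-1)$-dimensional projective subspaces) of the $k$-dimensional projective horizon $B^\infty$.

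Next, I would suppose that $p \neq q$ both lie in $\bigcap\K$ and derive a contradiction. Since every $A \in \K$ satisfies $A \subset B$, both points lie in the affine space $B$, and by \eqref{localAff} the line $L$ of $\Afpolar$ joining them is contained in $B$. Each $A \in \K$ is an affine subspace of $B$ containing $p$ and $q$, hence containing all of $L$; therefore $L^\infty \in A^\infty$ for every $A \in \K$. Combined with the bijection from the preceding paragraph, this forces the single point $L^\infty$ of $B^\infty$ to lie on every hyperplane of $B^\infty$.

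This is impossible as soon as $\dim(B^\infty) \geq 1$, i.e.\ as soon as $k \geq 1$, because in a projective space of positive dimension not every hyperplane passes through a prescribed point. For the degenerate case $k = 0$, $\topof(B)$ consists of the points of an affine line and the parallelism on $0$-subspaces is total (every singleton has empty horizon), so the selector $\K$ is itself a singleton and $|\bigcap \K| = 1$ trivially. The main substantive point to verify carefully is the bijective correspondence between selectors and hyperplanes of $B^\infty$; once that is in hand, the rest is bookkeeping.
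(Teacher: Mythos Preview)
Your argument is correct and is essentially the paper's own proof: both assume $\bigcap\K$ contains a line $L$, use that the selector $\K$ hits every hyperplane direction of $B^\infty$, and reach a contradiction because some hyperplane of $B^\infty$ misses $L^\infty$. The paper phrases the last step by picking a specific hyperplane $C$ of $B^\infty$ with $L^\infty\notin C$ and taking the $A\in\K$ with $A^\infty = C$, while you phrase it contrapositively (``$L^\infty$ would lie on every hyperplane of $B^\infty$''); your extra $k=0$ remark is harmless but not needed in the paper's setting, where $\downadjac$ (and hence $\topsx$) is only considered for $k>0$.
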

\begin{proof}
  Let $\K\subset\topof(B)$ for $B$ as in \ref{prop:afkliki}\eqref{afklik:typ4}.
  Clearly, $\bigcap\K$ is a (affine) subspace of $B$.
  Suppose that $\bigcap\K$ contains a line $L$ of $\Afpolar$. Then $B^\infty$ contains 
  a hyperplane $C$ that misses $L^\infty$. On the other hand there is $A\in\K$
  with $A^\infty = C$. From assumption $L\subset A$ and a contradiction arises.
\end{proof}

\begin{fact}\label{fct:geoonkliks}
  Let $\K_1\in\tops$, $\K_2\in\starsm$, $\K_3\in\starsx$,
  $\K_4\in\topsx$, and $\K_5\in\stars$.
  \begin{sentences}\itemsep-2pt
  \item\label{fct:geoonkliks:1}
    Either $|\K_1 \cap \K_2|\leq 1$ or
    $\K_1 \cap \K_2\in\peki_k(\AfPolar)$.
  \item\label{fct:geoonkliks:2}
    Either $|\K_1 \cap \K_3|\leq 1$ or
    $\K_1 \cap \K_3\in\pekid_k(\AfPolar)$.
  \item\label{fct:geoonkliks:3}
    $|\K_2 \cap \K_3|\leq 1$. 
  \item\label{fct:geoonkliks:4}
    $\K_1$ is a strong subspace of\/ $\PencSpaced(k,\AfPolar)$; it carries geometry of
    a $(k+1)$-di\-men\-sio\-nal dual affine space i.e. of a 
    $(k+1)$-dimensional projective space with one point deleted.
    It is a subspace of\/ $\PencSpace(k,\AfPolar)$ but not strong.
  \item\label{fct:geoonkliks:5}
    $\K_2$ is a strong subspace both in $\PencSpaced(k,\AfPolar)$
    and in $\PencSpace(k,\AfPolar)$;
    it carries geometry of a $(m-k)$-dimensional projective space.
  \item\label{fct:geoonkliks:6}
    $\K_3$ is a strong subspace in $\PencSpaced(k,\AfPolar)$ and 
    an anti-clique in $\PencSpace(k,\AfPolar)$;
    it carries geometry of a $(m-k)$-dimensional affine space.
  \item\label{fct:geoonkliks:7}
    If\/ $\bigcap\K_4\neq\emptyset$, 
    then $\K_4$ carries geometry of a projective space. In general, however
    the geometry of $\K_4$ is much more complex. 
  \item\label{fct:geoonkliks:8}
    If\/ $k < m$ then $\K_5$ is a subspace in $\PencSpaced(k,\AfPolar)$
    but it is not strong;  the restriction of\/ $\PencSpace(k,\Afpolar)$
    to $\K_5$ carries geometry  of a polar space.
  \end{sentences}
  Every line of\/ $\PencSpace(k,\Afpolar)$ has form $\K_1 \cap \K_2$
  for some $\K_1\in\tops$ and $\K_2\in\starsm$;
  every line of\/ $\PencSpaced(k,\Afpolar)$ has form $\K_1 \cap \K_2$
  for some $\K_1\in\tops$ and $\K_2\in\starsm\cup\starsx$.
\end{fact}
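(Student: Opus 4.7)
The strategy is to transfer every question to the ambient polar space $\fixpol$ and to the local affine or polar subgeometries supplied by Facts~\ref{localAffpek} and~\ref{localPolar}. Each of the five clique types was defined in Proposition~\ref{prop:afkliki} by a parameter pair inside $\fixpol$, and the identities $\widetilde{\penc(C,B)}=\penc(\overline{C},\overline{B})$ and $\widetilde{\pencx(A_0,B)}=\penc(A_0^\infty,\overline{B})\setminus\{B^\infty\}$ allow us to realize pencils inside each clique as pencils of the ambient Grassmannian. Thus the plan is, for every clique type, to identify it with a standard model -- a dual affine space, a projective space, an affine space, a projective $k$-space, or a polar star -- and to read the assertions off that identification.

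For the intersection claims \rref{fct:geoonkliks:1}--\rref{fct:geoonkliks:3} the argument is set-theoretic. With $\K_1=\topof(B)$ and $\K_2=[C,M]$, the intersection equals $\{A\in\sub_k(\AfPolar):C\subset A\subset B\cap M\}$; a dimension count gives either at most one element, or else $B\subset M$ and the set is the pencil $\penc(C,B)$. With $\K_3=[A_0,M]^\ast$ the analogous computation in $\fixpol$ produces either a singleton or the parallel pencil $\pencx(A_0,B)$. For \rref{fct:geoonkliks:3}, an $A\in\K_2\cap\K_3$ forces $\overline{C}\cup A_0^\infty\subset\overline{A}$; since $C\not\subset\horyzont$ while $A_0^\infty\subset\horyzont$, the two $(k-1)$-subspaces are distinct and their join already has dimension $k$, so $\overline{A}=\overline{C}\sqcup A_0^\infty$ is uniquely determined.

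For the structure of a single clique I rely on local Grassmannians. Item \rref{fct:geoonkliks:4} uses Fact~\ref{localAffpek} with $D=B$: inside the affine $(k+1)$-space $B$, the space $\PencSpaced(k,B)$ is the dual of a projective $(k+1)$-space with one point removed (the improper hyperplane of $B$), while $\PencSpace(k,B)$ drops the parallel-class lines and hence is a subspace that is not strong. Item \rref{fct:geoonkliks:5} is the standard description of $k$-flats through a fixed $(k-1)$-flat $C$ inside the affine $m$-space $M$, parametrized by the projective $(m-k)$-space $M/C$ whose lines are exactly the induced pencils. Item \rref{fct:geoonkliks:6} identifies $\K_3$ with the affine quotient of $M$ by the direction of $A_0$; the $\ast$-pencils $\pencx(A_0,B)$ inside $\K_3$ are its lines, while ordinary pencils would require a shared $(k-1)$-subspace that distinct parallel $k$-flats do not possess, giving the $\badjac$-anti-clique statement. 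For \rref{fct:geoonkliks:7}, the hypothesis $p\in\bigcap\K_4$ forces $A=p\sqcup A^\infty$ for every $A\in\K_4$, so $\K_4$ is parametrized by $\sub_{k-1}(B^\infty)$, the dual of the projective $k$-space $B^\infty$.

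For \rref{fct:geoonkliks:8} I would invoke Fact~\ref{localPolar} applied to $\overline{C}$: since $C\not\subset\horyzont$ every $k$-subspace of $\fixpol$ containing $\overline{C}$ automatically escapes $\horyzont$, so $\widetilde{\K_5}=\starof(\overline{C})$ and every induced pencil $\penc(C,B)$ remains a full projective line; the restriction of $\PencSpace(k,\AfPolar)$ to $\K_5$ therefore coincides with the polar space provided by Fact~\ref{localPolar}. The final assertion is then the identity $\penc(C,B)=\topof(B)\cap[C,M]$ together with its $\ast$-analogue, valid for any $M\in\sub_m(\AfPolar)$ containing $B$, which exists because $k+1\le m$. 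I expect the main obstacle to be the polar case \rref{fct:geoonkliks:8}, where one must carefully check that passing to $\AfPolar$ deletes no pencil meeting $\K_5$ and that the polar-space axioms carry over verbatim.
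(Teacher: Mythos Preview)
Your proposal is correct and follows essentially the same route as the paper: items \eqref{fct:geoonkliks:4}--\eqref{fct:geoonkliks:7} are reduced via Fact~\ref{localAffpek} to the affine Grassmannian over the relevant strong subspace ($B$ or $M$), and item \eqref{fct:geoonkliks:8} via Fact~\ref{localPolar} to the polar quotient at $\overline{C}$. The only cosmetic differences are that the paper handles \eqref{fct:geoonkliks:3} by the one-line remark that no two elements of $\K_2$ are parallel while all elements of $\K_3$ are, and in \eqref{fct:geoonkliks:7} it invokes Fact~\ref{fact:selectors0} to write $\K_4=[a,B]$ rather than your equivalent parametrization by $\sub_{k-1}(B^\infty)$.
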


\begin{proof}
  The reasoning follows by close inspection of \ref{prop:afkliki} so, we use its notation.
  
  \eqref{fct:geoonkliks:1}:
  If $|\K_1 \cap \K_2|>1$, then $\K_1 \cap \K_2 = [C, B] = \penc(C,B)$, where $C, B$ are as 
  in \ref{prop:afkliki} \eqref{afklik:typ1}, \eqref{afklik:typ2} and $C\subset B$.
  
  \eqref{fct:geoonkliks:2}:
  If $|\K_1 \cap \K_3|>1$, then $\K_1 \cap \K_3 = [A_0^\infty, B]^\ast = \pencx(A_0,B)$, where $A_0, B$ are as 
  in \ref{prop:afkliki} \eqref{afklik:typ1}, \eqref{afklik:typ3} and $A_0\subset B$.
  
  \eqref{fct:geoonkliks:3}: 
  Immediate by \ref{prop:afkliki} as no two elements of $\K_2$ are parallel.

  In cases 
  \eqref{fct:geoonkliks:4},
  \eqref{fct:geoonkliks:5},
  \eqref{fct:geoonkliks:6}, and
  \eqref{fct:geoonkliks:7}
  the corresponding $\K_i$ consists of subspaces of an affine space:
  of $B$, $M$, $M$, and $B$ in respective cases.
  Our claim is a consequence of \ref{localAffpek}
  and known properties of affine Grassmann spaces.
  In \eqref{fct:geoonkliks:7} we use, additionally, \ref{fact:selectors0},
  as in this case 
  $\K_4 = [a,B] = \{ A\in\sub_k(\Afpolar)\colon a \in A \subset B \}$
  provided that a point $a$ is in $\bigcap\K_4$.
  
  \eqref{fct:geoonkliks:8}:
  It is evident that the map
  $\sub(\Afpolar)\ni X\mapsto\overline{X}\in\sub(\fixpol)$
  establishes an isomorphism of $\K_5$ 
  (more precisely: of the restriction of $\PencSpace(k,\Afpolar)$ to $\K_5$)
  onto the segment 
  $\widetilde{\K_5} = [\overline{C},Q]$ in $\PencSpace(k,\fixpol)$.
  Our claim is a consequence of \ref{localPolar}.
\end{proof}

Let us point out some deviations for extreme values of $k$ and $m$:
\begin{sentences}\itemsep-2pt
\item
  If $m=1$ then $\badjac$ and $\upadjac$ are sensible only for $k=0$ and
  then $\topof(B)\in\tops$ is a line of $\Afpolar$.
\item
  If $k = m$ then the classes
  $\tops$, $\starsm$, $\starsx$, $\topsx$ are void.
\item
  If $k = m-1$ then $[C,M]\in\starsm$ is a line of $\PencSpace(k,\Afpolar)$
  and $[A_0,M]^\ast\in\starsx$ is a line of $\PencSpaced(k,\Afpolar)$.
  In that case a line of $\PencSpace(k,\Afpolar)$ has exactly one extension to a
  maximal $\upadjac$-clique.
\item
  The geometries on the elements of $\tops$, $\starsm$, and $\starsx$ are
  pairwise distinct provided that $k\neq 0$ or $k\neq m-1$.
  If $k=0$ and $m=1$ then $\tops$ and $\starsx$ both consist of affine lines.
\end{sentences}

\begin{lem}\label{lem:linesx}
  Let $A_1,A_2\in\sub_k(\AfPolar)$ with $A_1\upadjac A_2$. Set $B := A_1 \sqcup A_2$ and
  \begin{equation}
    \X := \bigcap\bigl\{ \K\in\tops\cup\starsm\cup\starsx
    \colon A_1,A_2\in\K \bigr\}.
  \end{equation}
  If $A_1 \downadjac A_2$, then $\X = \penc(A_1\cap A_2, B)$
  and if $A_1 \parallel A_2$, then $\X = \pencx(A_1,B)$.
  In any case $\X\in\pekid_k(\Afpolar)$.
  Moreover,
  \begin{equation}\label{eq:linesx:X}
    \X = \bigcap\bigl\{ \K\in \starsm\cup\starsx
    \colon A_1,A_2\in\K \bigr\}.
  \end{equation}

  \par
  Let $k < m-1$. Then
  \begin{equation}\label{eq:linesx:def}
    A_3 \in\X\ (\text{i.e. } \colind(A_1,A_2,A_3)) 
    \iff (\forall A)      
      [\; A\upadjac A_1,A_2 \implies A\upadjac A_3\; ]
  \end{equation}
  for arbitrary $A_3\in\sub_k(\Afpolar)$.
\end{lem}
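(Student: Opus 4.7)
The plan is to carry out a case analysis on the relation between $A_1$ and $A_2$ and compute $\X$ directly from the classification in Proposition~\ref{prop:afkliki}. Since $A_1\upadjac A_2$ means that $A_1$ and $A_2$ are distinct hyperplanes in the $(k{+}1)$-dimensional affine space $B = A_1\sqcup A_2$, exactly one of the alternatives $A_1\downadjac A_2$ (they meet in a $(k{-}1)$-subspace $C_0$) and $A_1\parallel A_2$ holds. I then enumerate the maximal $\upadjac$-cliques containing both $A_1$ and $A_2$. The unique $\tops$-clique is $\topof(B)$. A $\starsm$-clique $[C,M]$ requires $C$ to be a $(k{-}1)$-subspace inside $A_1\cap A_2$, so it exists precisely when $A_1\downadjac A_2$, is forced to have $C=C_0$, and has $M$ ranging over all $M\in\sub_m(\Afpolar)$ with $B\subset M$. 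A $\starsx$-clique $[A_0,M]^\ast$ requires $A_0\parallel A_1$ and $A_0\parallel A_2$; by transitivity of parallelism this is possible iff $A_1\parallel A_2$, in which case I may take $A_0:=A_1$, again letting $M$ range over maximal strong subspaces containing $B$.

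Computing the intersection in the case $A_1\downadjac A_2$ yields
\begin{equation*}
  \X \;=\; \topof(B)\cap\bigcap_{M\supset B}[C_0,M] \;=\; \bigl\{A\in\sub_k(\Afpolar) : C_0\subset A\subset B\cap{\textstyle\bigcap_{M}M}\bigr\}.
\end{equation*}
The key identity $\bigcap_{M\supset B}M = B$ is a standard polar-space property, applicable because $\dim B<m$ and $B\not\subset\horyzont$ (it passes from $\fixpol$ to $\Afpolar$ directly). Hence $\X = \penc(C_0,B)$. The parallel case is entirely analogous and gives $\X = \pencx(A_1,B)$. In either case $\X\in\pekid_k(\Afpolar)$. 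The same computation proves \eqref{eq:linesx:X}: intersecting only over $\starsm\cup\starsx$-cliques already forces $A\subset\bigcap_{M}M = B$, so $\topof(B)$ is redundant.

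Finally for~\eqref{eq:linesx:def}, the implication ($\Rightarrow$) is immediate: given $A\upadjac A_1,A_2$ the triple $\{A,A_1,A_2\}$ extends to a maximal $\upadjac$-clique $\K'$, which contains $A_3$ by definition of $\X$, forcing $A\upadjac A_3$. The reverse implication is the main obstacle. Given $A_3\notin\X$ I pick a maximal clique $\K\ni A_1,A_2$ that misses $A_3$, and by maximality there exists $A\in\K$ with $A\not\upadjac A_3$. The delicate step is to guarantee that such $A$ can be chosen distinct from $A_1$ and $A_2$ (otherwise $A\upadjac A_1,A_2$ fails). This is precisely where the hypothesis $k<m-1$ enters: by Fact~\ref{fct:geoonkliks}~\eqref{fct:geoonkliks:4}--\eqref{fct:geoonkliks:6} every maximal clique then carries a projective or affine Grassmann-type geometry of dimension at least~$2$, and within it the elements $\upadjac$-adjacent to the fixed outside element $A_3$ form a proper subvariety, which cannot exhaust $\K\setminus\{A_1,A_2\}$. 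This richness is what drives the reverse direction and, I expect, will be the only step requiring care in the full write-up.
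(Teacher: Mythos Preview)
Your plan is correct and follows essentially the same route as the paper. For the first two claims you enumerate the maximal $\upadjac$-cliques through $A_1,A_2$ via Proposition~\ref{prop:afkliki} and intersect, just as the paper does (the paper phrases the computation for \eqref{eq:linesx:X} in $\fixpol$ via $C:=\overline{A_1}\cap\overline{A_2}$ and $\bigcap_M[C,M]=[C,\overline{B}]$, then restricts; your $\bigcap_{M\supset B}M=B$ is the same fact). One minor difference worth noting: for the very first claim the paper simply invokes Fact~\ref{fct:geoonkliks}\eqref{fct:geoonkliks:1}--\eqref{fct:geoonkliks:3}, observing that $A_1,A_2$ lie in exactly one $\tops$-clique and exactly one clique of type $\starsm$ or $\starsx$ (never both, by \eqref{fct:geoonkliks:3}), so $\X$ is the intersection of precisely two cliques and equals the relevant pencil directly---no need to intersect over all $M$ separately there.

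For \eqref{eq:linesx:def} your treatment is actually more explicit than the paper's. The paper argues in one sentence that the right-hand side says $A_3$ is $\upadjac$-adjacent to every element of every maximal clique through $A_1,A_2$, hence by maximality $A_3$ lies in each such clique, hence in $\X$. It does not spell out the issue you isolate (that the witness $A$ in the contrapositive must differ from $A_1,A_2$, equivalently that one must still secure $A_3\upadjac A_1,A_2$), nor does it make the role of $k<m-1$ visible in the argument. Your plan to exploit the fact that each maximal clique has dimension at least $2$ under this hypothesis is the right way to close that gap; the ``proper subvariety'' heuristic can be replaced by the cleaner observation that $\K\setminus\{A_1,A_2\}$ already determines $\K$ uniquely among maximal $\upadjac$-cliques (two points of $\K$ not both in $\{A_1,A_2\}$ suffice), so a maximal clique extending $\{A_3\}\cup(\K\setminus\{A_1,A_2\})$ is forced to be $\K$.
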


\begin{proof}
  In view of \ref{prop:afkliki} and \ref{fct:geoonkliks}\eqref{fct:geoonkliks:3}
  under our assumptions $A_1, A_2$ are in two of the three possible maximal
  $\upadjac$-cliques: one of type $\tops$ and the other of type $\starsm$  or
  $\starsx$, depending on whether $A_1 \downadjac A_2$ or $A_1 \parallel A_2$
  respectively. The first statement follows from
  \ref{fct:geoonkliks}\eqref{fct:geoonkliks:1}-\eqref{fct:geoonkliks:3}. 

  To prove \eqref{eq:linesx:X} set $C:=\overline{A_1}\cap\overline{A_2}$ and observe that 
    $$\bigcap\bigl\{\widetilde{\K}\colon A_1,A_2\in\K\in\starsm\cup\starsx \bigr\} = 
        \bigcap\Bigl\{\bigl[C,M\bigr]\colon M\in\sub_m(\fixpol), \overline{A_1}\cup\overline{A_2}\subset M\Bigr\} = 
        [C,\overline{B}].$$
  Deleting $\horyzont$ from $\fixpol$ that set becomes either 
  $\penc(A_1\cap A_2, B)$, or $\pencx(A_1,B)$ depending on whether
  $C\not\subset\horyzont$ or $C\subset\horyzont$ respectively.

  To prove \eqref{eq:linesx:def} note that the right hand side of it 
  means that $A_3$ is $\upadjac$-adjacent to every element of all the cliques
  $A_1, A_2$ belong to. 
  In particular, $A_3$ belongs to each of the maximal $\upadjac$-cliques that
  contains $A_1,A_2$.
  This suffices to state that equivalently $A_3$ is
  in the meet of the two appropriate cliques, i.e. $A_3$ is
  collinear with $A_1, A_2$ in  $\PencSpaced(k,\Afpolar)$.
\end{proof}

\begin{cor}\label{cor:ua->pencp}
  Let $k < m -1$. Then\/ $\PencSpaced(k,\Afpolar)$ is definable in terms of 
  $\upadjac$ on $\sub_k(\Afpolar)$.
  If\/ $0 < k = m-1$, then the structure $\GrasSpace(k,\Afpolar)$
  is definable in terms of $\upadjac$.
\end{cor}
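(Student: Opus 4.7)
The plan is to apply Lemma~\ref{lem:linesx} directly in the generic regime $k < m-1$, and then to handle the boundary case $0 < k = m-1$ separately by reading off $(k{+}1)$-subspaces as maximal $\upadjac$-cliques via Proposition~\ref{prop:afkliki}.

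For $k < m-1$: the pointset of $\PencSpaced(k,\Afpolar)$ is $\sub_k(\Afpolar)$, i.e.\ the underlying set of the given relational structure, and by the display preceding Fact~\ref{localAffpek} the relation $\upadjac$ coincides with binary collinearity in $\PencSpaced(k,\Afpolar)$. Equation \eqref{eq:linesx:def} of Lemma~\ref{lem:linesx} expresses the ternary collinearity $\colind$ of $\PencSpaced(k,\Afpolar)$ as a first-order formula using only $\upadjac$; consequently, for each pair $A_1 \neq A_2$ with $A_1 \upadjac A_2$, the set $\{A_3 : \colind(A_1,A_2,A_3)\}$ is the unique line of $\PencSpaced(k,\Afpolar)$ through $A_1$ and $A_2$. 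Since every line of $\PencSpaced(k,\Afpolar)$ contains two distinct $\upadjac$-adjacent points, this recovers the full lineset $\pekid_k(\Afpolar)$.

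For $0 < k = m-1$: Proposition~\ref{prop:afkliki} records that $\starsm$- and $\starsx$-type cliques fail to be maximal in this regime, so every maximal $\upadjac$-clique has the form $\topof(B) = \{A \in \sub_k(\Afpolar) : A \subset B\}$ with $B \in \sub_{k+1}(\Afpolar) = \sub_m(\Afpolar)$. The map $B \mapsto \topof(B)$ is injective, since any two distinct $A_1, A_2 \in \topof(B)$ are $k$-subspaces of the $(k{+}1)$-dimensional affine subspace $B$, forcing $A_1 \sqcup A_2 = B$, so $B$ is uniquely determined by the clique $\topof(B)$. Identifying $\sub_{k+1}(\Afpolar)$ with the family of maximal $\upadjac$-cliques and replacing inclusion $A \subset B$ by membership $A \in \topof(B)$ then reproduces exactly the structure $\GrasSpace(k,\Afpolar)$.

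No serious obstacle is expected here: Lemma~\ref{lem:linesx} supplies the essential first-order definition of $\colind$ in the generic case, and in the boundary case $k = m-1$ the classification of Proposition~\ref{prop:afkliki} immediately exhibits the maximal $\upadjac$-cliques as precisely the $(k{+}1)$-subspaces, so the two cases glue together into a short corollary.
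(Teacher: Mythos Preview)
Your proof is correct and follows essentially the same route the paper intends: for $k<m-1$ you invoke \eqref{eq:linesx:def} of Lemma~\ref{lem:linesx} to define $\colind$ and hence the lines of $\PencSpaced(k,\Afpolar)$, and for $0<k=m-1$ you use the clique classification of Proposition~\ref{prop:afkliki} to identify the maximal $\upadjac$-cliques with the sets $\topof(B)$, $B\in\sub_{k+1}(\Afpolar)$, exactly as the paper later confirms in the discussion around \eqref{obs:1}.
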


\begin{cor}\label{cor:difcliq-ua}
  If\/ $k < m-1$, then the
  three types $\tops$, $\starsm$, and $\starsx$ of maximal 
  $\upadjac$-cliques are distinguishable in terms of $\upadjac$,
  as they (as subspaces of $\PencSpaced(k,\Afpolar)$) carry distinct geometries.
\end{cor}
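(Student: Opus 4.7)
The plan is to combine Corollary \ref{cor:ua->pencp} with the classification of geometries on maximal $\upadjac$-cliques given in Fact \ref{fct:geoonkliks}. Under $k<m-1$, Corollary \ref{cor:ua->pencp} ensures that $\PencSpaced(k,\Afpolar)$ is $\upadjac$-definable; hence for any maximal $\upadjac$-clique $\K\subseteq\sub_k(\Afpolar)$ the substructure of $\PencSpaced(k,\Afpolar)$ induced on $\K$, and in particular its isomorphism type, is an invariant expressible in $\upadjac$. It therefore suffices to verify that the induced geometries on elements of $\tops$, $\starsm$, $\starsx$ are pairwise non-isomorphic as partial linear spaces.

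By Fact \ref{fct:geoonkliks}\eqref{fct:geoonkliks:4}--\eqref{fct:geoonkliks:6}, these induced geometries are, respectively, a $(k+1)$-dimensional dual affine space (a projective space with one point removed), an $(m-k)$-dimensional projective space, and an $(m-k)$-dimensional affine space. Under $k<m-1$ we have $m-k\geq 2$, so the second and third are non-degenerate; when $k\geq 1$ so is the first.

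The required pairwise non-isomorphism reduces to standard observations about parallelism. A projective space admits no pair of disjoint coplanar lines, whereas both the affine and the dual affine spaces do; this separates $\starsm$ from the other two. An affine space carries a \emph{full} parallelism --- through each point off a given line there passes a unique line disjoint from it --- while in a dual affine space the only pairs of disjoint coplanar lines are those whose ambient projective extensions would meet at the deleted point, and hence a generic line admits no parallel at all. This separates $\starsx$ from $\tops$. These intrinsic features survive even in the borderline situation where $k+1$ and $m-k$ coincide.

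The main obstacle I anticipate is the degenerate case $k=0$: there the dual affine space reduces to an affine line and, as noted in the remarks following Fact \ref{fct:geoonkliks}, the three nominal classes may fail to be pairwise distinct as families of subsets of $\sub_0(\Afpolar)$. In that range the distinction has to be recovered from dimension alone --- an element of $\tops$ is a line, while an element of $\starsm$ or $\starsx$ carries the $m$-dimensional affine geometry of a maximal strong subspace of $\Afpolar$. For $k\geq 1$ the uniform parallelism argument above applies directly and closes the proof.
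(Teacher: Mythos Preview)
Your proof is correct and follows exactly the paper's (implicit) argument: the corollary is stated there without proof, relying on Corollary~\ref{cor:ua->pencp} for the $\upadjac$-definability of $\PencSpaced(k,\Afpolar)$ together with the geometry classification in Fact~\ref{fct:geoonkliks}\eqref{fct:geoonkliks:4}--\eqref{fct:geoonkliks:6} and the subsequent remark that these geometries are pairwise distinct when not both $k=0$ and $k=m-1$. You simply make that last claim explicit via the parallelism/dual-affine argument, which is a welcome addition rather than a departure.
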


\begin{cor}\label{cor:pencp->penc}
  The class of maximal strong subspaces of\/ $\PencSpaced(k,\Afpolar)$ is equal to 
  $\tops \cup \starsm \cup \starsx$ when $k < m-1$, and it is equal to $\tops$
  when $k = m-1$.
  The classes $\pekix_k(\Afpolar)$ and $\peki_k(\Afpolar)$ are distinguishable
  in terms of geometry of\/ $\PencSpaced(k,\Afpolar)$, provided $k\neq 0$;
  consequently, $\PencSpace(k,\AfPolar)$ is definable in $\PencSpaced(k,\Afpolar)$
\end{cor}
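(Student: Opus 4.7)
The plan is to dispose of the first assertion (the classification of maximal strong subspaces) by combining \ref{prop:afkliki} with \ref{fct:geoonkliks}, and then to treat the separation of $\peki_k(\Afpolar)$ from $\pekix_k(\Afpolar)$ in two cases, $k<m-1$ and $k=m-1$, with the latter being the delicate one.

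For the maximal strong subspaces of $\PencSpaced(k,\Afpolar)$, the key remark is that binary collinearity in this space coincides with $\upadjac$, so every maximal strong subspace is in particular a maximal $\upadjac$-clique. By \ref{prop:afkliki} every such clique belongs to $\tops\cup\starsm\cup\starsx$, and by \ref{fct:geoonkliks}\eqref{fct:geoonkliks:4}--\eqref{fct:geoonkliks:6} the members of each of the three classes are indeed strong subspaces. When $k<m-1$ no type degenerates and all three yield maximal strong subspaces. When $k=m-1$, elements of $\starsm$ and $\starsx$ are merely lines, properly contained in the $(k+1)$-dimensional elements of $\tops$, so only $\tops$ remains maximal.

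For the separation of $\peki_k(\Afpolar)$ and $\pekix_k(\Afpolar)$ when $k<m-1$, \ref{cor:difcliq-ua} says the three classes $\tops$, $\starsm$, $\starsx$ are pairwise distinguishable in $\PencSpaced(k,\Afpolar)$ as they carry distinct geometries. The closing clause of \ref{fct:geoonkliks} then yields that a line of $\PencSpaced(k,\Afpolar)$ belongs to $\peki_k(\Afpolar)$ iff it is contained in some member of $\starsm$, and to $\pekix_k(\Afpolar)$ iff it is contained in some member of $\starsx$; the two alternatives are mutually exclusive, since a common containment would force some $\starsm$-clique and some $\starsx$-clique to share at least two points, contradicting \ref{fct:geoonkliks}\eqref{fct:geoonkliks:3}.

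For $k=m-1$ the only maximal strong subspaces are the $\topof(B)\in\tops$, each a dual affine space of dimension $k+1\ge 2$, i.e., a projective space with a single point $*$ removed. All $\penc$- and $\pencx$-lines of $\PencSpaced(k,\Afpolar)$ sit inside such a $\topof(B)$; the $\pencx$-lines correspond to projective lines through $*$ (with $*$ deleted), and the $\penc$-lines to projective lines avoiding $*$. My intrinsic criterion will be: $\ell\subset\topof(B)$ lies in $\pekix_k(\Afpolar)$ iff some line $\ell'\subset\topof(B)$, distinct from $\ell$, is disjoint from $\ell$ and lies with it in a common $2$-dimensional strong subspace of $\topof(B)$. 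The forward implication is clear since two distinct projective lines through $*$ meet only at $*$ and span a plane through $*$; the converse uses that two coplanar projective lines always meet, so if they are point-disjoint in the dual affine then their unique projective intersection must be $*$, whence $*\in\ell$ and $\ell$ is a $\pencx$-line. Both coplanarity and disjointness are intrinsic to the partial linear space $\PencSpaced(k,\Afpolar)$. Once $\peki_k(\Afpolar)$ is identified on the common point set $\sub_k(\Afpolar)$, the structure $\PencSpace(k,\Afpolar)$ is recovered. The main obstacle is precisely this $k=m-1$ case, where the three-type argument is unavailable and the distinction has to be read off the internal geometry of a single dual affine space.
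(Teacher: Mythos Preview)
Your argument is correct and follows the paper's line closely. For the first assertion and the case $k<m-1$ you do exactly what the paper does: invoke \ref{prop:afkliki} and \ref{fct:geoonkliks} to identify the maximal $\upadjac$-cliques with the maximal strong subspaces, and then read off that a line lies in $\peki_k$ (resp.\ $\pekix_k$) iff it sits in some member of $\starsm$ (resp.\ $\starsx$). One small phrasing issue: ``every maximal strong subspace is in particular a maximal $\upadjac$-clique'' is not immediate---a priori it is only a $\upadjac$-clique---but the missing step (any maximal $\upadjac$-clique is itself a strong subspace by \ref{fct:geoonkliks}, so maximality forces equality) is routine.

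For $k=m-1$ the paper does not argue at all; it simply observes that each $\topof(B)$ is a dual affine space and cites \cite{sapls} for the fact that its ``affine'' lines (those through the deleted point) are intrinsically distinguishable. You instead supply a concrete criterion: $\ell$ is a $\pencx$-line iff it has a coplanar disjoint companion inside $\topof(B)$. This is a valid and self-contained replacement for the citation. The paper also records, just after its proof, an alternative global criterion (a line is in $\pekix_k$ iff some triangle in $\PencSpaced(k,\Afpolar)$ has $\ell$ missing its vertices and crossing exactly two sides); your criterion is the localized, ``parallel-lines-in-a-plane'' version of the same phenomenon, and either one works.
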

\begin{proof}
  The first claim follows immediately from \ref{fct:geoonkliks}.
  To prove the second claim it suffices to note that if $p\in\pekid_k(\Afpolar)$  then
  \begin{itemize}\def\labelitemi{}\itemsep-2pt
  \item
    $p\in\pekix_k(\AfPolar)$ iff  $p\subset \K$ for some $\K\in\starsx$
    and 
  \item
    $p\in\peki_k(\AfPolar)$ iff $p\subset \K$ for some $\K\in\starsm$.
  \end{itemize}
  This closes the proof in case $k < m-1$.
  Let $k = m-1$, then ${\cal X} = \topof(B)\in\tops$ is simply 
  a dual affine space, i.e. a projective space with one point deleted;
  it is known that ``affine'' lines on $\cal X$ 
  (which are exactly the elements of $\pekix_k(\Afpolar)$ contained in $\cal X$)
  can be distinguished from the 
  class of all the lines on $\X$ (cf. \cite{sapls}).
\end{proof}

  Note that the two types of pencils within $\PencSpaced(k,\Afpolar)$ 
  can be directly distinguished as follows. Let $p\in\pekid_k(\AfPolar)$. 
  We have $p\in\pekix_k(\AfPolar)$ iff there is 
  a triangle $\Delta$ in $\PencSpaced(k,\Afpolar)$ such that $p$ misses 
  the vertices of $\Delta$ and crosses exactly two of its sides.

\begin{lem}\label{lem:difcliq-da-ba}
  Let $\K\in\topsx\cup\stars$ and $A \in\K$. 
  \begin{sentences}\itemsep-2pt
  \item 
    Let $\K\in\stars$. 
    If $k=m$ we assume, additionally, that $\fixpol$ is not of type $\sf D$ 
    (which is read, in our terminology, as 
    $|\starof(C)|\geq 3$ with $C\in\sub_{k-1}(\fixpol)$).
    Then an element of\/ $\topsx\cup\stars$ that
    contains all the elements of\/ $\K$ except, possibly, $A$
    contains $A$ as well, and thus 
    it coincides with $\cal K$.
  \item
    Let $\K\in\topsx$ (note that then $k<m$).
    Then there is $A'\in\sub_k(\Afpolar)$ such that $A\neq A'$
    and
    $\K\setminus\{ A \} \cup \{ A' \}\in\topsx\cup\stars$.
  \end{sentences}
  Consequently, the two types $\topsx$ and $\stars$ of $\downadjac$-cliques
  are distinguishable in terms of $\downadjac$.
 \par
  The same property distinguishes $\topsx$ from $\starsm$ and thus
  these two types of $\badjac$-cliques
  are distinguishable in terms of $\badjac$.
\end{lem}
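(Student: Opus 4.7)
The plan exploits the asymmetry between the rigid classes $\stars$, $\starsm$ and the flexible class $\topsx$: a clique in $\stars$ or $\starsm$ is pinned down by any two of its elements via $A_1\cap A_2=C$, while a selector in $\topsx$ can freely exchange any representative for another hyperplane in the same parallel class inside $B$.

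For part (i), fix $\K=\starof(C)\in\stars$ and a rival $\K'\in\topsx\cup\stars$ with $\K\setminus\{A\}\subseteq\K'$. The hypothesis (in particular the type-$\sf D$ exclusion when $k=m$) ensures $|\K|\geq 3$, so I can pick distinct $A_1,A_2\in\K\setminus\{A\}$, and then $A_1\cap A_2=C$ by dimension count. If $k=m$ then $\topsx=\emptyset$ by \ref{prop:afkliki}, so $\K'=\starof(C')$; then $C'\subseteq A_1\cap A_2=C$ and matching dimensions force $C'=C$, hence $\K'=\K$. If $k<m$ and $\K'=\starof(C')$ the same argument applies; if instead $\K'\subseteq\topof(B')$ is a selector, then $A_1\sqcup A_2\subseteq B'$ is $(k+1)$-dimensional and so equals $B'$. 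Every element of $\K\setminus\{A\}$ is then a $k$-subspace of $B'$ containing $C$, hence lies in $\penc(C,B')$, forcing $\starof(C)\setminus\penc(C,B')\subseteq\{A\}$. But by \ref{localPolar} the restriction to $[C,\Quadr]$ is a polar space of index $m-k\geq 1$, in which $\starof(C)$ is the full point set and $\penc(C,B')$ a single line; any line of such a polar space has at least two points in its complement, a contradiction. So only the $\stars$ case survives and it forces $\K'=\K\ni A$.

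For part (ii), let $\K\in\topsx$ be a selector of $B^\infty$ in $\topof(B)$. In the $(k+1)$-dimensional affine space $B$, the parallel class of $A$ contains at least one other hyperplane $A'\neq A$. Replacing $A$ with $A'$ preserves the selector property --- exactly the same parallel classes of $B^\infty$ remain represented, each by exactly one element --- so $(\K\setminus\{A\})\cup\{A'\}\in\topsx\subseteq\topsx\cup\stars$.

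Combining (i) and (ii), the property ``some $A\in\K$ can be replaced by an $A'\neq A$ yielding another member of the class'' holds for $\topsx$ but fails for $\stars$, distinguishing them in $\downadjac$-terms. The same proof distinguishes $\topsx$ from $\starsm$ in the $\badjac$ setting: for $\K=[C,M]\in\starsm$ the intersection argument again pins down $C''=C$, and then $M''=M$ because the join of sufficiently many elements of $\K\setminus\{A\}$ equals $M$; the $\topsx$-subcase goes through verbatim with $[C,M]$ in place of $\starof(C)$ in the pencil-containment step. The principal technical obstacle is the small-parameter bookkeeping throughout --- guaranteeing $|\K|\geq 3$ (precisely the type-$\sf D$ caveat when $k=m$), that the residue polar space is rich enough to furnish the required off-line element, and, in the $\starsm$-variant, that $\K\setminus\{A\}$ actually spans $M$.
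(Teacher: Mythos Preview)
Your argument is correct and follows the same approach as the paper's: selectors in $\topsx$ admit replacing any member $A$ by a parallel $A'\neq A$, while this fails in $\stars$ and $\starsm$. The paper's own proof is much terser---it explicitly handles only part~(ii) and simply asserts that such replacement ``is not doable'' for star-type cliques---so your detailed treatment of part~(i), in particular the use of the residue polar space to rule out the $\topsx$ alternative, goes beyond what the paper itself supplies.
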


\begin{proof}
  Recall by \ref{prop:afkliki}\eqref{afklik:typ4} that a clique $\K$ in $\topsx$ is
  a selector of $B^\infty$, i.e. the set of distinct representatives from all possible
  directions of $k$-subspaces in $B$. Every such a representative $A$ can be
  selected up to parallelism, in other words every $A$ can be replaced by $A'$
  such that $A'\neq A$, $A'\parallel A$ and $A'\in\topof(B)$. This is not doable
  with elements in cliques of type $\stars$ or $\starsm$.
\end{proof}

\begin{lem}\label{lem:colin}
  Let $A_1,A_2,A_3 \in\sub_k(\Afpolar)$ be pairwise distinct.
  If\/ $k < m$, then we have
  \begin{equation}\label{eq:colindownadjac}
    \colin(A_1,A_2,A_3) \iff (\exists \K_1\in\topsx)(\exists \K_2\in\stars)
      [\; A_1,A_2,A_3 \in \K_1,\K_2\; ]
  \end{equation}
  and
  \begin{equation}\label{eq:colinbadjac}
    \colin(A_1,A_2,A_3) \iff (\exists \K_1\in\topsx)(\exists \K_2\in\starsm)
      [\; A_1,A_2,A_3 \in \K_1,\K_2\; ].
  \end{equation}
\end{lem}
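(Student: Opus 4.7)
The plan is to prove both equivalences in parallel, since they have identical structure apart from the choice of $\K_2$. The strategy is to observe that a pencil $\penc(C,B)$ through $A_1,A_2,A_3$ encodes precisely the two pieces of information captured by the two cliques: the $(k-1)$-subspace $C$ (captured by $\K_2$) and the ambient $(k+1)$-subspace $B$ (captured by $\K_1$).

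For the direction $(\Leftarrow)$: suppose $A_1,A_2,A_3 \in \K_1 \cap \K_2$. From $\K_1 \in \topsx$ we obtain $B \in \sub_{k+1}(\Afpolar)$ with $\K_1 \subset \topof(B)$, hence $A_i \subset B$ for all $i$. In the case of \eqref{eq:colindownadjac}, $\K_2 = \starof(C)$ gives $C \subset A_i$, and the $A_i$ lie on $\penc(C,B)$. In the case of \eqref{eq:colinbadjac}, $\K_2 = [C,M]$ gives $C \subset A_i \subset M$. Since $A_1 \neq A_2$ are both contained in the $(k+1)$-subspace $B$, their join $A_1 \sqcup A_2$ has dimension $k+1$ and must coincide with $B$; but $A_1 \sqcup A_2 \subset M$ as well, so $B \subset M$, and again $A_i \in \penc(C,B)$. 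In both cases $\colin(A_1,A_2,A_3)$ follows.

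For the direction $(\Rightarrow)$: assume $\colin(A_1,A_2,A_3)$, so there exist $C \in \sub_{k-1}(\Afpolar)$ and $B \in \sub_{k+1}(\Afpolar)$ with $C \subset A_i \subset B$. I produce $\K_2$ directly: for \eqref{eq:colindownadjac} let $\K_2 = \starof(C) \in \stars$, and for \eqref{eq:colinbadjac} extend $B$ to a maximal strong subspace $M \in \sub_m(\Afpolar)$ (if $k+1 = m$ just take $M = B$) and set $\K_2 = [C,M] \in \starsm$; in each case $A_1,A_2,A_3 \in \K_2$ is clear. The construction of $\K_1 \in \topsx$ is the delicate step: by \ref{prop:afkliki}\eqref{afklik:typ4} I must exhibit a selector of $B^\infty$ inside $\topof(B)$ containing the three given hyperplanes of $B$. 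The key remark is that since $A_i, A_j$ are distinct $k$-subspaces of $B$ both containing the $(k-1)$-subspace $C$, they must intersect (in $C$), hence they cannot be parallel; therefore $A_1^\infty, A_2^\infty, A_3^\infty$ are three pairwise distinct hyperplanes of the $k$-dimensional projective horizon $B^\infty$. Consequently $\{A_1,A_2,A_3\}$ picks three distinct hyperplane directions of the affine space $B$, and one may freely choose a representative hyperplane for each of the remaining directions to complete this family to a full selector $\K_1$ of $B^\infty$. This $\K_1$ is in $\topsx$ and contains $A_1,A_2,A_3$, so $A_1,A_2,A_3 \in \K_1 \cap \K_2$.

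The main obstacle is the selector construction: I need to know that the three subspaces truly occupy three distinct ``slots'' of the selector, which rests on the observation that hyperplanes of $B$ sharing a common $(k-1)$-dimensional subspace $C$ are automatically non-parallel. Everything else is straightforward bookkeeping with the descriptions of the clique classes provided by \ref{prop:afkliki}, together with the standing hypothesis $k < m$ which guarantees $\topsx$ is non-empty.
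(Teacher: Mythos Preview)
Your proof is correct and follows essentially the same route as the paper's own argument: in both directions you identify the pencil $\penc(C,B)$, construct $\K_2$ as $\starof(C)$ or $[C,M]$, and build the selector $\K_1$ after noting that the $A_i$, sharing the proper subspace $C$, are pairwise non-parallel. The only minor difference is that in the $(\Leftarrow)$ case for \eqref{eq:colinbadjac} you take an unnecessary detour through $B\subset M$---once you have $C\subset A_i$ from $\K_2=[C,M]$ and $A_i\subset B$ from $\K_1$, membership in $\penc(C,B)$ is immediate---but this does no harm.
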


\begin{proof}
  \ltor
  Assume that $A_1,A_2,A_3\in\penc(C, B)$, where 
  $C\in\sub_{k-1}(\AfPolar)$ and $C\subset B\in\sub_{k+1}(\AfPolar)$.
  As no two of $A_1,A_2,A_3$ are parallel take a selector $\K_1\in\topsx$
  of $B^\infty$ with $A_1,A_2,A_3\in\K_1$, and take $\K_2:=\starof(C)$
  to get \eqref{eq:colindownadjac} or $\K_2:=[C,M]$ for some $M\in\sub_m(\AfPolar)$
  with $B\subset M$ to get \eqref{eq:colinbadjac}. We are through here by \ref{prop:afkliki}.
  
  \rtol
  Assume that $\K_1\subset\topof(B)$ and $\K_2=\starof(C)$  
  or $\K_2=[C,M]$, for some $B, C, M$ like in \ref{prop:afkliki}.
  In both cases $\K_1\cap\K_2\subseteq\penc(C,B)$ and thus $A_1,A_2,A_3$
  are collinear in $\PencSpace(k,\AfPolar)$.
\end{proof}

\begin{cor}\label{cor:da+ba->penc}
  If\/ $k < m$, then the structure $\PencSpace(k,\AfPolar)$ is definable both in
  terms of  $\downadjac$ and in terms of $\badjac$.
\end{cor}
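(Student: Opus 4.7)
The plan is to combine Lemmas \ref{lem:difcliq-da-ba} and \ref{lem:colin} into a short assembly. The pointset of $\PencSpace(k,\Afpolar)$ is $\sub_k(\Afpolar)$, which is the same underlying set on which the adjacencies $\downadjac$ and $\badjac$ live, so it suffices to recover the ternary collinearity $\colin$ (equivalently, the lineset $\peki_k(\Afpolar)$) from each of the two adjacencies.

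First I would observe that the maximal cliques of a binary relation are first-order definable from it. By Proposition \ref{prop:afkliki}, the family of maximal $\downadjac$-cliques is exactly $\topsx\cup\stars$, and the family of maximal $\badjac$-cliques is exactly $\topsx\cup\starsm$; hence in each case the corresponding total family is definable in the given adjacency. Lemma \ref{lem:difcliq-da-ba} then separates the two subfamilies within each: it distinguishes $\topsx$ from $\stars$ using $\downadjac$ alone, and $\topsx$ from $\starsm$ using $\badjac$ alone. Consequently each of the classes $\stars$, $\starsm$, and $\topsx$ becomes individually definable in the appropriate adjacency structure.

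With those clique classes isolated, I would apply the equivalences \eqref{eq:colindownadjac} and \eqref{eq:colinbadjac} from Lemma \ref{lem:colin}: they give explicit first-order formulas expressing $\colin(A_1,A_2,A_3)$ for pairwise distinct triples in terms of $\downadjac$ and $\badjac$, respectively. The line through two distinct collinear points $A_1, A_2$ is then $\{A_1,A_2\}\cup\{A_3\colon \colin(A_1,A_2,A_3)\}$, which yields $\peki_k(\Afpolar)$ and hence the whole structure $\PencSpace(k,\Afpolar)$.

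I do not expect a genuine obstacle, since the substance is packed into the two preceding lemmas and the proof is essentially a bookkeeping assembly. The only thing worth verifying is that the hypothesis $k<m$ matches the ranges in which the invoked lemmas operate: Lemma \ref{lem:colin} assumes exactly $k<m$, and under this same bound Lemma \ref{lem:difcliq-da-ba} applies without its supplementary type-$\sf D$ caveat (which is relevant only when $k=m$). So the argument goes through uniformly for all admissible $k$.
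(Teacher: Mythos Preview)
Your proposal is correct and follows exactly the route the paper intends: the corollary is stated without proof as an immediate consequence of Lemma~\ref{lem:difcliq-da-ba} (separating the clique types) together with Lemma~\ref{lem:colin} (characterizing $\colin$ via those clique types), and your assembly spells this out faithfully. Your remark that the hypothesis $k<m$ removes the type-$\sf D$ caveat in Lemma~\ref{lem:difcliq-da-ba} is also on point.
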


\begin{fact}\label{fact:selectors}
  Let $\K\in\topsx$ and take the least subspace $\K'$ of\/ 
  $\PencSpace(k,\Afpolar)$ that contains $\K$.
  If \ $\bigcap\K\neq \emptyset$ then $\K' = \K$.
  If \ $\bigcap\K = \emptyset$ then
  $\K' = \topof(B)$ for some $B\in\sub_{k+1}(\Afpolar)$.
\end{fact}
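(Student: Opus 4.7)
I would split by whether $\bigcap\K\neq\emptyset$ or $\bigcap\K=\emptyset$.

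If $\bigcap\K\neq\emptyset$, then by Fact~\ref{fact:selectors0} we have $\bigcap\K=\{a\}$ for exactly one point $a\in B$. Every $A\in\K$ is a $k$-subspace of $B$ through $a$; since in an affine space a $k$-subspace is determined by its direction together with any point it contains, the selector property of $\K$ then forces $\K=\{A\in\sub_k(\Afpolar):a\in A\subset B\}$, which is the set denoted $[a,B]$ in the proof of Fact~\ref{fct:geoonkliks}\eqref{fct:geoonkliks:7}. It remains to verify that $[a,B]$ is a subspace of $\PencSpace(k,\Afpolar)$: two distinct $A_1,A_2\in[a,B]$ sharing $a$ are non-parallel (two parallel hyperplanes of $B$ cannot share a point), so $A_1\cap A_2$ is a $(k-1)$-subspace containing $a$ with $A_1\sqcup A_2\subset B$, and then $\penc(A_1\cap A_2,A_1\sqcup A_2)$ consists entirely of $k$-subspaces through $a$ contained in $B$, i.e.\ lies in $[a,B]$. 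This gives $\K'=\K$.

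If $\bigcap\K=\emptyset$, then $\K'\subseteq\topof(B)$ since $\topof(B)$ is itself a subspace of $\PencSpace(k,\Afpolar)$ by Fact~\ref{fct:geoonkliks}\eqref{fct:geoonkliks:4}. For the reverse inclusion I would work inside $\topof(B)$, viewed via Fact~\ref{localAffpek} as the affine Grassmann space $\PencSpace(k,B)$ of the affine $(k+1)$-space $B$; its projective completion is the projective space $\overline{B}^*$ dual to $\overline{B}$, with a distinguished missing point $*=[B^\infty]$, and the pencils of the restricted structure correspond exactly to the projective lines of $\overline{B}^*$ that miss $*$. Projective hyperplanes of $\overline{B}^*$ intersected with $\topof(B)$ split into two families: $[a,B]$ for $a\in B$ (projective hyperplanes avoiding $*$), and $T_p:=\{A\in\topof(B):p\in A^\infty\}$ for $p\in B^\infty$ (projective hyperplanes through $*$, minus that point). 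The assumption $\bigcap\K=\emptyset$ rules out $\K\subseteq[a,B]$ for any $a\in B$, while the selector property rules out $\K\subseteq T_p$ for any $p\in B^\infty$: the latter inclusion would force every direction $A^\infty$ (for $A\in\K$) to contain $p$, but these directions range over all $(k-1)$-subspaces of the $k$-dimensional projective space $B^\infty$, and for $k\geq 1$ not every such subspace passes through a prescribed point. Hence $\K$ is not contained in any proper projective subspace of $\overline{B}^*$.

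The main obstacle is lifting this projective non-containment to the pencil-generation statement $\K'=\topof(B)$, since the only lines available for closure are those projective lines of $\overline{B}^*$ that miss $*$. My plan is to track the family $\mathcal C:=\{C\in\sub_{k-1}(\Afpolar):C\subset B\text{ and }\penc(C,B)\subseteq\K'\}$: after one round of pencil closure $\mathcal C$ contains every $A_1\cap A_2$ with $A_1,A_2\in\K$ non-parallel, and each freshly added pencil enlarges $\K'$ and hence $\mathcal C$. Using the abundance of directions realized by the selector $\K$ together with $\bigcap\K=\emptyset$, one shows inductively that $\mathcal C$ saturates the set of all $(k-1)$-subspaces of $B$, at which point every $A\in\topof(B)$ contains some $C\in\mathcal C$ and so lies in $\penc(C,B)\subseteq\K'$, giving $\topof(B)\subseteq\K'$.
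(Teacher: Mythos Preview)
Your approach matches the paper's: both split by whether $\bigcap\K$ is empty, identify $\K=[a,B]$ in the first case, and aim for $\K'=\topof(B)$ in the second. For the first case you supply the verification that $[a,B]$ is a subspace of $\PencSpace(k,\Afpolar)$, which the paper omits; this is correct and welcome.

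For the second case, the paper's entire argument is the phrase ``the claim is evident.'' Your elaboration---passing to the dual projective space $\overline{B}^\ast$ with the deleted point $*=[B^\infty]$, observing that the relevant lines are precisely those avoiding $*$, and showing $\K$ lies in no hyperplane of $\overline{B}^\ast$---is correct and identifies the real content behind ``evident.'' However, your final paragraph is only a sketch: you assert that the family $\mathcal C$ ``saturates'' by induction without carrying out the induction. This is not a logical error, but it leaves exactly the gap you yourself flagged as ``the main obstacle.'' A clean way to close it is to prove directly that in a slit projective space $P\setminus\{*\}$ with only the proper lines, the proper subspaces are exactly the sets $X\setminus\{*\}$ for $X$ a projective subspace of $P$, together with arbitrary subsets of a single line through $*$; this follows by a short induction on $\dim P$ (the base case $\dim P=2$ is the computation that three non-collinear points generate everything, and the inductive step feeds any point into a plane spanned by points already obtained). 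Since your $\K$ lies in no hyperplane of $\overline{B}^\ast$ and is not contained in a single parallel class, its closure must be the whole of $\topof(B)$. With that lemma made explicit, your argument is complete and strictly more informative than the paper's.
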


\begin{proof}
  Let $\K\subset\topof(B)$ for $B$ as in \ref{prop:afkliki}\eqref{afklik:typ4}.
  From \ref{fact:selectors0} there are two cases to consider.
  Firstly, let $a$ be a common point of all $A\in\K$.  Then 
  $\K = [a,B]$,
  which is, already, a subspace of $\PencSpace(k,\Afpolar)$.
  If $\bigcap\K = \emptyset$, the claim is evident.
\end{proof}

Note that in the first case of \ref{fact:selectors} $\K'$ ($=\K$ here) carries
geometry of some projective space and in the second case the geometry of a dual
affine  space. 
\begin{cor}\label{cor:mocnesubWpenc}
  A maximal strong subspace of\/ $\PencSpace(k,\Afpolar)$ is either an element of 
  $\starsm$ or it has form $[a,B]$ with $B\in\sub_{k+1}(\AfPolar)$ and a point
  $a$ of\/ $\Afpolar$ on $B$.
\end{cor}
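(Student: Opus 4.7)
The plan is to leverage the already-classified maximal $\badjac$-cliques and determine which of them are genuine subspaces of $\PencSpace(k,\Afpolar)$. Since $\badjac$ is, by definition, the binary collinearity of $\PencSpace(k,\Afpolar)$, any strong subspace $\cal S$ is in particular a $\badjac$-clique, so by Proposition~\ref{prop:afkliki} it is contained in some maximal $\badjac$-clique $\K\in\starsm\cup\topsx$.

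If $\K\in\starsm$, Fact~\ref{fct:geoonkliks} shows that $\K=[C,M]$ carries projective-space geometry, so $\K$ is itself a strong subspace and maximality of $\cal S$ forces $\cal S=\K$. If $\K\in\topsx$ with $a\in\bigcap\K$, then $\K=[a,B]$ by Fact~\ref{fact:selectors}; again Fact~\ref{fct:geoonkliks} gives projective geometry, so $\K$ is strong and $\cal S=\K=[a,B]$. The remaining subcase is $\K\in\topsx$ with $\bigcap\K=\emptyset$. Here, by Fact~\ref{fact:selectors} the subspace closure of $\K$ equals $\topof(B)$, which is \emph{not} a strong subspace of $\PencSpace(k,\Afpolar)$ by Fact~\ref{fct:geoonkliks}, so $\cal S$ must be strictly contained in $\K$. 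The task reduces to producing a point $a\in\bigcap\cal S$: then $\cal S\subseteq[a,B]$, and since $[a,B]$ is a strong subspace (the already-proved $\bigcap\ne\emptyset$ case), maximality of $\cal S$ yields $\cal S=[a,B]$.

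The main obstacle is precisely this last subcase, i.e.\ showing $\bigcap\cal S\neq\emptyset$. The tool is Fact~\ref{localAffpek}: $\cal S$ may be regarded as a strong subspace of the affine Grassmannian $\PencSpace(k,B)$ of hyperplanes of the $(k+1)$-dimensional affine space $B$. Pick distinct $A_1,A_2\in\cal S$ and set $C:=A_1\cap A_2$; then $\penc(C,B)\subseteq\cal S$ by subspace closure. If some $A_3\in\cal S$ does not contain $C$, the $\badjac$-clique condition forbids $A_3$ from being parallel to any element of $\penc(C,B)$; this rules out $C^\infty\subseteq A_3^\infty$, because otherwise the unique $A'\in\penc(C,B)$ with direction $A_3^\infty$ would be parallel to $A_3$. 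The map $A\mapsto A^\infty$ is injective on $\cal S$ since no two clique elements are parallel, and iterating the parallelism obstruction across every pair of pencils in $\cal S$ constrains the direction set $\{A^\infty:A\in\cal S\}$ to be a subspace of the projective dual of $B^\infty$. The only family of hyperplanes of $B$ with pairwise non-parallel positions and closed under pencils that realize such a direction subspace are those concurrent at a single affine point $a\in B$, and this $a$ is the required common point, closing the argument.
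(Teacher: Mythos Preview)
Your overall strategy---extend a maximal strong subspace $\cal S$ to a maximal $\badjac$-clique $\K\in\starsm\cup\topsx$ and then use Facts~\ref{fct:geoonkliks} and~\ref{fact:selectors} to dispose of the cases where $\K$ is already a strong subspace---is exactly the route the paper intends (the statement is given as a corollary of those two facts without further proof). The first two cases are handled correctly.

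The gap is in your third sub-case. Your final paragraph aims to prove $\bigcap\cal S\neq\emptyset$, but the argument is not sound as written. The sentence ``the only family of hyperplanes of $B$ \ldots\ are those concurrent at a single affine point $a\in B$'' is false: for instance, when $k=2$ and $C$ is a line of $B$, the pencil $\penc(C,B)$ is a strong subspace whose common intersection is $C$, not a single point. More importantly, your ``iterating the parallelism obstruction'' step does not actually establish that the direction set is a dual subspace, nor does knowing that the direction set is a subspace force a common affine point; you are asserting the conclusion rather than proving it.

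The clean way to close this case---and presumably what the paper has in mind given the remark just after Fact~\ref{fact:selectors} about dual affine geometry---is to pass to the projective closure. By Fact~\ref{localAffpek}, $\cal S$ is a strong subspace of $\PencSpace(k,B)$ restricted to $\topof(B)$; this structure is the projective space $\topof(\overline{B})$ with the single point $B^\infty$ removed and only the projective lines \emph{missing} $B^\infty$ retained. Strongness says the projective line through any two elements of $\cal S$ avoids $B^\infty$, and the subspace property then forces that whole projective line into $\cal S$. Hence $\cal S$ is a genuine projective subspace of $\topof(\overline{B})$ not containing $B^\infty$. By projective duality, such a subspace has the form $\{\,A\in\topof(\overline{B}):Z\subset A\,\}$ for some subspace $Z$ of $\overline{B}$, and $B^\infty\notin\cal S$ translates to $Z\not\subset B^\infty$. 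Any affine point $a\in Z\setminus B^\infty$ then lies in $\bigcap\cal S$, giving $\cal S\subseteq[a,B]$, and maximality finishes the argument.
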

In particular case where $k = 1$, $m = 2$ maximal strong subspaces of 
$\PencSpace(k,\Afpolar)$ are lines.

\begin{cor}\label{cor:penc->ua}
  The class $\tops$ is definable in the structure $\PencSpace(k,\Afpolar)$.
  Consequently, the structure $\PencSpaced(k,\Afpolar)$ and the relation 
  $\upadjac$ on $\sub_k(\Afpolar)$ are definable in $\PencSpace(k,\Afpolar)$.
\end{cor}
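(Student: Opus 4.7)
The plan is to reconstruct the ``affine'' pencils $\pekix_k(\Afpolar)$ inside $\PencSpace(k,\Afpolar)$; once these are available, $\pekid_k(\Afpolar) = \peki_k(\Afpolar)\cup\pekix_k(\Afpolar)$ yields $\PencSpaced(k,\Afpolar)$, and $\upadjac$ is then just its binary collinearity. The starting point is Corollary \ref{cor:mocnesubWpenc}: every maximal strong subspace of $\PencSpace(k,\Afpolar)$ is either an element of $\starsm$ (a projective space of dimension $m-k$) or of the form $[a,B]$ with $B\in\sub_{k+1}(\Afpolar)$ and $a\in B$ (a projective space of dimension $k$); call this latter class $\tops^{pr}$. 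When $m\neq 2k$ the two classes are separated by intrinsic dimension; in the exceptional case $m=2k$ one separates them by the pattern of pencil-line intersections with other maximal strong subspaces, reflecting the fact that each $[a,B]$ is tightly tied to a unique ``top'' $\topof(B)$ in $\Afpolar$ while a $\starsm$-element has no such feature.

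Next I would reassemble each $\topof(B)$ from its stars. A direct computation with the definitions shows: for $\K_i = [a_i,B_i]\in\tops^{pr}$ ($i=1,2$), if $B_1\neq B_2$ then every $A\in\K_1\cap\K_2$ lies in $B_1\cap B_2$ and contains both $a_1$ and $a_2$, hence is uniquely determined; whereas if $B_1=B_2=B$ and $a_1\neq a_2$ then $\K_1\cap\K_2$ consists of \emph{all} $k$-subspaces of $B$ through the line $\overline{a_1,a_2}$ inside the affine space $B$, which is a nontrivial family as soon as $k\geq 2$. Thus for $k\geq 2$ the condition $|\K_1\cap\K_2|\geq 2$ is an intrinsically definable equivalence on $\tops^{pr}$ whose classes are indexed by $B$, and $\topof(B) = \bigcup\{[a,B]\colon a\in B\}$ is the union of one such class; varying $B$ gives the whole class $\tops$. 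The boundary case $k=1$ requires a short separate argument that recovers $B$ from any two non-concurrent pencils in $\tops^{pr}$ via the join operation in $\Afpolar$.

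With $\tops$ in hand, I would extract $\pekix_k(\Afpolar)$ from inside each $\X = \topof(B)$. By Fact \ref{fct:geoonkliks}\eqref{fct:geoonkliks:4}, the substructure of $\PencSpace(k,\Afpolar)$ induced on $\X$ is a $(k+1)$-dimensional dual affine space, and the elements of $\pekix_k(\Afpolar)$ contained in $\X$ are exactly its ``affine'' lines. By the result of \cite{sapls} already invoked in the proof of Corollary \ref{cor:pencp->penc}, these affine lines are intrinsically definable inside a dual affine space from its incidence structure alone. Taking the union over all $\X\in\tops$ gives $\pekix_k(\Afpolar)$, hence $\pekid_k(\Afpolar)$, hence $\PencSpaced(k,\Afpolar)$, and finally $\upadjac$ as the binary collinearity of $\PencSpaced(k,\Afpolar)$.

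The main obstacle, as usual in such recovery arguments, is the middle step: intrinsically pinning down when two $\tops^{pr}$-elements sit over the same $B$, together with the degenerate cases $k=1$ and $m=2k$. Step 1 is essentially a reading of Corollary \ref{cor:mocnesubWpenc}, Step 3 is an invocation of the cited result on dual affine spaces, and the final assembly into $\PencSpaced(k,\Afpolar)$ and $\upadjac$ is immediate, so the technical weight of the argument is concentrated in the equivalence-class construction of Step 2.
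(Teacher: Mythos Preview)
Your overall strategy---recover $\tops$, then extract the missing ``short'' lines inside each $\topof(B)$---is the same as the paper's, but your route to $\tops$ diverges from the intended one and carries two weak spots.

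The paper's (implicit) argument exploits Fact~\ref{fact:selectors} directly. The binary collinearity of $\PencSpace(k,\Afpolar)$ is $\badjac$; its maximal cliques are $\topsx\cup\starsm$ (Proposition~\ref{prop:afkliki}), and Lemma~\ref{lem:difcliq-da-ba} separates $\topsx$ from $\starsm$ uniformly, with no dimensional case split. For any $\K\in\topsx$, Fact~\ref{fact:selectors} says its span in $\PencSpace(k,\Afpolar)$ is either $\K$ itself (when $\bigcap\K\neq\emptyset$) or some $\topof(B)$; since every $B$ admits a selector with empty intersection, $\tops$ is exactly the set of spans of $\topsx$-elements that strictly enlarge the clique. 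Then $A_1\upadjac A_2$ iff some element of $\tops$ contains both, and the $\pencx$-lines inside $\topof(B)$ are simply the classes of non-$\badjac$-adjacency there (each point lies on a unique short line in a punctured projective space). No appeal to \cite{sapls} is needed at this point, and no special treatment of $k=1$ or $m=2k$ is required.

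Your approach instead passes through Corollary~\ref{cor:mocnesubWpenc} and the ``pointed tops'' $[a,B]$. This is workable, but two steps are not established. First, your separation of $\tops^{pr}$ from $\starsm$ when $m=2k$ is only asserted; both are $k$-dimensional projective spaces, and ``pattern of pencil-line intersections'' is not an argument. (A clean fix is available: since each $[a,B]$ is in particular a selector, hence an element of $\topsx$, Lemma~\ref{lem:difcliq-da-ba} applies verbatim and distinguishes it from any $\starsm$-element.) Second, for $k=1$ your grouping criterion $|\K_1\cap\K_2|\ge 2$ fails, and your proposed repair ``via the join operation in $\Afpolar$'' is circular---you are working inside $\PencSpace(1,\Afpolar)$ and do not have $\Afpolar$ yet. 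What actually works is the span \emph{in $\PencSpace$} of two concurrent pencil-lines $[a_1,B]$, $[a_2,B]$; one checks directly that this span is $\topof(B)$. Finally, your invocation of \cite{sapls} in Step~3 presumes the \emph{full} dual-affine line set on $\topof(B)$, whereas after Steps~1--4 you only have the $\PencSpace$-lines (the long ones); the short lines must be produced as non-collinearity classes, which is easier than \cite{sapls} but is not what you wrote.
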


Further reasoning to prove \ref{thm:main}
is standard and we will give only a brief overview here.
Let us begin with $k<m-1$. We will show
the induction step, that is, we
start with one of the adjacencies $\downadjac$, $\upadjac$ or $\badjac$ on
$\sub_k(\Afpolar)$ and in terms of such a system interpret adequate
adjacency on $\sub_{k-1}(\Afpolar)$. So, consider two maps:
\begin{itemize}\def\labelitemi{}\itemsep-2pt
\item
$f\colon \sub_{k-1}(\Afpolar)\ni C \longmapsto \starof(C)\in\stars$,
\item
$g\colon \sub_{k-1}(\Afpolar)\ni C \longmapsto \left\{ [C,Y]\colon C\subset Y\in\sub_m(\Afpolar) \right\}
\subset \starsm$.
\end{itemize}

By \ref{prop:afkliki}, $\stars\cup\topsx$ consists of the maximal $\botadjac$-cliques, by 
\ref{lem:difcliq-da-ba}, $\stars$ and $\topsx$ are distinguishable in terms of $\botadjac$, 
and thus the image $\stars$ of $f$ is definable in terms of $\downadjac$ on $\sub_k(\Afpolar)$.
The map $f$ sets a one-to-one correspondence between elements of $\stars$ and
$\sub_{k-1}(\Afpolar)$. Moreover, $\starof(C_1)\cap\starof(C_2)\neq\emptyset$
iff $C_1\upadjac C_2$ which gives $\upadjac$ on $\sub_{k-1}(\Afpolar)$.

Similarly, by \ref{prop:afkliki}, $\starsm\cup\starsx \cup \tops$ is the class of the maximal
$\upadjac$-cliques, and by \ref{cor:difcliq-ua}, $\starsm$ is distinguishable in terms
of $\upadjac$ on $\sub_k(\Afpolar)$.
Finally, by \ref{prop:afkliki}, the elements of $\starsm\cup\topsx$ are the maximal $\badjac$-cliques,
and by \ref{lem:difcliq-da-ba}, $\starsm$ and $\topsx$ are distinguishable in terms of $\badjac$. 
So, the class $\starsm$ is definable on $\sub_k(\Afpolar)$ both in terms of $\upadjac$ 
and in terms of $\badjac$.
Two stars $\K_1, \K_2\in\starsm$ are said to be related iff $|\K_1\cap\K_2|\geq 2$. 
If so, we write $\K_1\approx\K_2$.
If $\K_i = [C_i,Y_i]\in\starsm$, $i=1,2$ and $\K_1\approx \K_2$ then, clearly, $C_1 = C_2$.
Let $C\in\sub_{k-1}(\Afpolar)$. Then $\overline{C}\in\sub_{k-1}(\fixpol)$.
It is known that $[\overline{C},\Quadr]_k$ induces a polar space which is
connected and therefore the 
transitive closure of the relation $\approx$ partitions the family $\starsm$ into equivalence 
classes which uniquely correspond to the elements of $\sub_{k-1}(\Afpolar)$ via the map $g$.
The same trick as in the previous paragraph gives us $\upadjac$ on $\sub_{k-1}(\Afpolar)$.

Note that for $C\in\sub_{k-1}(\Afpolar)$ and $A\in\sub_k(\Afpolar)$
we have $C\subset A$ iff $A\in f(C)$ as well as iff $A\in\bigcup g(C)$. 
Hence, what we have actually defined is $\GrasSpace(k-1,\Afpolar)$.
In turn, the relation $\upadjac$ on $\sub_{k-1}(\Afpolar)$
remains definable in $\GrasSpace(k-1,\Afpolar)$ and 
we can continue our inductive procedure as long as $k\geq 1$ (so, $k-1\geq 0$).
Proceeding inductively, we end up with $\GrasSpace(0,\Afpolar)$ which is,
up to an isomorphism, our affine polar space $\Afpolar$ 
and that way Theorem~\ref{thm:main} is proved.
Note that in case $k<m-1$ Corollary~\ref{cor:main} is an immediate consequence
of the above result.

Now, let us pay attention to the cases $k=m-1$ and $k=m$.
Here, we need some other techniques.
In view of \ref{prop:afkliki} and \ref{fct:geoonkliks}
the maximal cliques of the relation $\upadjac$ defined on $\sub_{m-1}(\Afpolar)$
are the elements of $\tops$, and the maximal cliques of
 $\botadjac$ defined on
$\sub_m(\Afpolar)$ are the elements of $\stars$,
which yields that
\begin{multline} \label{obs:1}
  \text{the structure }
  \GrasSpace(m-1,\Afpolar) 
  \text{ is definable in  both } \struct{\sub_m(\Afpolar),\botadjac}
  \text{ and } 
  \struct{\sub_{m-1}(\Afpolar),\upadjac},
  \text{ and} 
  \\  \text{the structures}
  \struct{\sub_m(\Afpolar),\botadjac} \text{ and }
   \struct{\sub_{m-1}(\Afpolar),\upadjac}
  \text{ are mutually definable}
  \end{multline}
(cf. a particular case of \eqref{obs:1} in \ref{cor:ua->pencp}).

The case where $\badjac$ is defined on $\sub_{m-1}(\Afpolar)$ requires a
different treatment. In this case the function $g$ makes sense as previously, but
now {\em related stars} would coincide, as the elements of $\starsx$ are simply
the lines of $\PencSpace(m-1,\Afpolar)$.  So, it is impossible to identify the
elements of $\sub_{m-2}(\Afpolar)$ with the classes of mutually related stars.

\begin{prop}
  The relation $\upadjac$ on\/ $\sub_{m-1}(\Afpolar)$ can be characterized in terms of
  $\badjac$ defined on $\sub_{m-1}(\Afpolar)$.
  Consequently, $\GrasSpace(m-1,\Afpolar)$ can be recovered within $\PencSpace(m-1,\Afpolar)$.
\end{prop}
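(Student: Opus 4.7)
The plan is to recover $\upadjac$ from $\badjac$ on $\sub_{m-1}(\Afpolar)$ by detouring through the pencil space $\PencSpace(m-1,\Afpolar)$, and then to read off $\GrasSpace(m-1,\Afpolar)$ via \eqref{obs:1}.

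First I would invoke Corollary~\ref{cor:da+ba->penc} (valid since $k = m - 1 < m$): it gives $\PencSpace(m-1,\Afpolar)$ as definable in terms of $\badjac$. The mechanism is Lemma~\ref{lem:colin}, which expresses ternary collinearity $\colin$ via \eqref{eq:colinbadjac} using maximal $\badjac$-cliques of types $\topsx$ and $\starsm$; these two types are distinguishable by Lemma~\ref{lem:difcliq-da-ba}, and at this extreme value $k = m-1$ the lines of $\PencSpace(m-1,\Afpolar)$ are precisely the elements of $\starsm$.

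Next I would apply Corollary~\ref{cor:penc->ua} at $k = m - 1$: the class $\tops$ is definable within $\PencSpace(m-1,\Afpolar)$, and hence so is $\upadjac$, for $A_1 \upadjac A_2$ amounts to $A_1 \neq A_2$ together with the existence of some $\topof(B) \in \tops$ containing both. Chaining the two steps establishes the first assertion of the proposition. For the consequence, once $\upadjac$ is recovered on $\sub_{m-1}(\Afpolar)$, the map $B \mapsto \topof(B)$ is a bijection between $\sub_m(\Afpolar)$ and the maximal $\upadjac$-cliques of type $\tops$, and the incidence $A \subset B$ is captured by $A \in \topof(B)$; this is exactly \eqref{obs:1} specialized to $k = m - 1$, and it delivers $\GrasSpace(m-1,\Afpolar)$ inside $\PencSpace(m-1,\Afpolar)$.

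The delicate point in this scheme is verifying that Corollary~\ref{cor:penc->ua} still applies at the boundary $k = m - 1$. Here the maximal strong subspaces of $\PencSpace(m-1,\Afpolar)$, by Corollary~\ref{cor:mocnesubWpenc}, are the lines $[C,M] \in \starsm$ and the projective $(m-1)$-spaces $[a,B]$, while $\topof(B)$ is itself an $m$-dimensional dual affine subspace that is not strong. The expected route is to identify $\topof(B)$ with the subspace of $\PencSpace(m-1,\Afpolar)$ spanned by two maximal strong subspaces $[a,B]$, $[a',B]$ through distinct points of $B$; in the edge case $m = 2$ the two kinds of maximal strong subspaces collapse to lines, and a direct inspection of the affine plane $B$ is required to close the argument.
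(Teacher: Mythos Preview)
Your overall strategy---chain Corollary~\ref{cor:da+ba->penc} with Corollary~\ref{cor:penc->ua} and then invoke \eqref{obs:1}---is valid, and the corollaries do apply at $k=m-1$ as stated. The paper, however, does not cite these corollaries here; instead it gives a direct, self-contained formula. It observes that the maximal $\badjac$-cliques on $\sub_{m-1}(\Afpolar)$ are the selectors in $\topsx$ and the stars in $\starsm$, distinguishes them via Lemma~\ref{lem:difcliq-da-ba}, and then introduces a ternary relation $\Pi(A_1,A_2,A_3)$ meaning that $A_1,A_2,A_3$ lie in a common selector but not in a common star (equivalently, they form a proper triangle in $\PencSpace(m-1,\Afpolar)$). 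Any such triple pins down a unique $M\in\sub_m(\Afpolar)$ with $A_1,A_2,A_3\in\topof(M)$, and the paper then writes $B_1\upadjac B_2$ as: there exist $A_1,A_2,A_3$ with $\Pi(A_1,A_2,A_3)$ and selectors $\K_1,\K_2\in\topsx$ containing $A_1,A_2,A_3$ together with $B_1$, respectively $B_2$. This formula works uniformly, including at $m=2$.

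Your last paragraph, where you try to justify Corollary~\ref{cor:penc->ua} at $k=m-1$ by spanning two maximal strong subspaces $[a,B]$ and $[a',B]$, is not the cleanest route and is precisely what creates your $m=2$ worry (since at $m=2$ the classes $[a,B]$ and $\starsm$ coincide as pencils). The intended mechanism behind Corollary~\ref{cor:penc->ua} is Fact~\ref{fact:selectors}: one first isolates the selectors $\K\in\topsx$ among the maximal $\badjac$-cliques (via Lemma~\ref{lem:difcliq-da-ba}), and then $\tops$ consists of those spans $\K'$ in $\PencSpace(m-1,\Afpolar)$ that strictly enlarge $\K$ (equivalently, of those $\K'$ that are not themselves strong). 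This argument is uniform in $m$ and dissolves the edge case you flagged. So your route is legitimate but your supporting discussion should be rerouted through selectors rather than through pairs of $[a,B]$'s; once that is done, your proof and the paper's are essentially two packagings of the same idea, with the paper's explicit $\Pi$-formula trading generality of citation for concreteness.
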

\begin{proof}
  In view of \ref{prop:afkliki},
  the maximal cliques of the relation $\badjac$ on $\sub_{m-1}(\Afpolar)$
  are the selectors in $\topsx$ and the stars in $\starsx$.
  From \ref{lem:difcliq-da-ba}, the elements of $\topsx$ and $\starsx$ are distinguishable.
  \par
  For $A_1,A_2,A_3\in\sub_{m-1}$ write $\Pi(A_1,A_2,A_3)$ when $A_1,A_2,A_3\in{\cal K}$
  for some ${\cal K}\in\topsx$ and there is no ${\cal K}'\in\starsx$ with $A_1,A_2,A_3\in{\cal K}'$.
  Directly in terms of $\PencSpace(m-1,\Afpolar)$ one can express this definition as follows:
  $\Pi(A_1,A_2,A_3)$ iff $A_1,A_2,A_3$ are the vertices of a proper triangle. In any case,
  if  $\Pi(A_1,A_2,A_3)$, then $A_1,A_2,A_3\in\topof(M)$ for a uniquely determined 
  $M\in\sub_m(\Afpolar)$.
  To close the proof, we note that the following holds
  \begin{multline}
    B_1 \upadjac B_2 \iff \bigl(\exists A_1,A_2,A_3\in\sub_{m-1}(\Afpolar)\bigr)
	(\exists \K_1,\K_2\in{\topsx})
        \bigl[\;
          \Pi(A_1,A_2,A_3) \Land
	  \\
	  A_1,A_2,A_3,B_1\in\K_1 \Land A_1,A_2,A_3,B_2\in\K_2\;
	\bigr]
  \end{multline}
  for any $B_1,B_2\in\sub_{m-1}(\Afpolar)$.
\end{proof}

Let us consider the following relation 
$\afbotadjacx$
defined for any 
$A_1,A_2 \in \sub_{m-1}(\Afpolar)$:
\begin{multline}\label{def:afbotadjacx}
  A_1 \afbotadjacx A_2 \iff \bigl(\forall D_1,D_2 \in \sub_m(\Afpolar)\bigr)
    \big[\; \wedge_{i=1}^2 (A_i \subset D_i) \implies 
      \\
      D_1 = D_2 \Lor 
      D_1 \downadjac D_2 \Lor 
      \bigl(\exists D\in\sub_m(\Afpolar)\bigr)[\; D \downadjac D_1,D_2\; ]\;
    \big].
\end{multline}

\begin{lem}\label{lem:gras->polar}
  Let $A_1,A_2 \in \sub_{m-1}(\Afpolar)$. The following conditions are 
  equivalent.
  \begin{sentences}\itemsep-2pt
  \item $A_1 \afbotadjacx A_2$.
  \item
    $\overline{A_1} \afbotadjac \overline{A_2}$ holds i.e.
    either $A_1 \downadjac A_2$ or $A_1^\infty = A_2^\infty$
    (i.e. $A_1 \parallel A_2$).
  \end{sentences}
\end{lem}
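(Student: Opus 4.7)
The plan is to translate everything to the ambient polar space $\fixpol$ using \eqref{zal:down}--\eqref{zal:up}: the key observation is that (ii) is equivalent to $\dim(\overline{A_1}\cap\overline{A_2})\geq m-2$ (with the trivial case $A_1=A_2$ absorbed), the two alternatives $A_1\downadjac A_2$ versus $A_1\parallel A_2$ being distinguished solely by whether $\overline{A_1}\cap\overline{A_2}$ meets $\AfPolar$ or is swallowed by $\horyzont$. Throughout, $D_1\downadjac D_2$ in $\AfPolar$ corresponds via \eqref{zal:down} to $\overline{D_1}\afbotadjac\overline{D_2}$ in $\fixpol$ together with $\overline{D_1}\cap\overline{D_2}\not\subset\horyzont$.

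For the direction (ii)$\Rightarrow$(i), I would fix arbitrary extensions $D_1,D_2\in\sub_m(\AfPolar)$ with $A_i\subset D_i$. Then $\overline{A_1}\cap\overline{A_2}\subset\overline{D_1}\cap\overline{D_2}$ forces $\dim(\overline{D_1}\cap\overline{D_2})\geq m-2$. The cases $\overline{D_1}=\overline{D_2}$ and ``the intersection is an $(m-1)$-singular meeting $\AfPolar$'' need no further work. The genuine subcases are when $\dim(\overline{D_1}\cap\overline{D_2})=m-2$, or when it is an $(m-1)$-singular lying entirely in $\horyzont$. To supply the required common $\downadjac$-neighbour I would invoke Fact~\ref{localPolar} and pass to the residue polar space at $\overline{A_1}\cap\overline{A_2}$ (respectively, at $A_1^\infty$ in the parallel subcase); this residue has index $1$, i.e.\ is a generalized quadrangle, in which $\overline{D_1},\overline{D_2}$ appear as two lines, and the defining GQ axiom furnishes a common neighbour line $\overline{D}$.

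For the direction (i)$\Rightarrow$(ii) I argue contrapositively. Assume $A_1\neq A_2$ and $t:=\dim(\overline{A_1}\cap\overline{A_2})<m-2$. A direct Grassmann-formula computation inside the projective $m$-space $\overline{D}$ yields the key estimate: if $\overline{D}\afbotadjac\overline{D_1}$ and $\overline{D}\afbotadjac\overline{D_2}$, the two hyperplanes $\overline{D}\cap\overline{D_1}$ and $\overline{D}\cap\overline{D_2}$ of $\overline{D}$ intersect in projective dimension at least $m-2$, forcing $\dim(\overline{D_1}\cap\overline{D_2})\geq m-2$. It therefore suffices to exhibit extensions $\overline{D_i}\supset\overline{A_i}$ with $\dim(\overline{D_1}\cap\overline{D_2})=t$, and by Fact~\ref{localPolar} this is the question, in the residue polar space at $\overline{A_1}\cap\overline{A_2}$ (of rank $m-t\geq 3$), of finding disjoint maximal singular extensions of two disjoint $(m-t-2)$-singulars --- a standard consequence of thickness in polar spaces of rank at least $3$.

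The delicate point, and the main obstacle, is the horizon-tracking in the converse direction: the common neighbour $\overline{D}$ produced by the GQ axiom must additionally satisfy $\overline{D}\not\subset\horyzont$ and $\overline{D}\cap\overline{D_i}\not\subset\horyzont$, so that $D:=\overline{D}\setminus\horyzont$ genuinely witnesses $D\downadjac D_i$ in $\AfPolar$. One handles this by noting that on each line $\overline{D_i}$ of the residue GQ the pencil of $(m-1)$-singulars through the residue centre has at least three members (thickness), only one of which is the bad $\overline{D_i}\cap\horyzont$; this leaves enough freedom to pick a GQ-point on $\overline{D_1}$ whose collinear partner on $\overline{D_2}$ also avoids the bad direction.
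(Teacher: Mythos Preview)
Your approach is essentially the paper's own, only more explicit: the paper also reduces (ii)$\Rightarrow$(i) to ``$\dim(\overline{D_1}\cap\overline{D_2})\ge m-2$ and then a common neighbour exists by properties of $\fixpol$'', and for (i)$\Rightarrow$(ii) it simply \emph{asserts} the existence of maximal singulars $D'_1,D'_2$ through $\overline{A_1},\overline{A_2}$ with $D'_1\cap D'_2=\overline{A_1}\cap\overline{A_2}$ and declares that they violate \eqref{def:afbotadjacx}. Your Grassmann-formula estimate inside $\overline{D}$ (two hyperplanes of an $m$-space meet in dimension $\ge m-2$) is exactly the missing justification for this last assertion, and your use of Fact~\ref{localPolar} makes precise what the paper means by ``properties of the polar space''.

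There is, however, a genuine gap in your horizon-tracking for the subcase of (ii)$\Rightarrow$(i) where $\overline{D_1}\cap\overline{D_2}$ is an $(m-1)$-singular contained in $\horyzont$, i.e.\ $D_1\parallel D_2$ with $D_1\neq D_2$. In the residue generalized quadrangle at $A_1^\infty$ the images of $\overline{D_1},\overline{D_2}$ are then two lines \emph{meeting} at the bad point $H=D_1^\infty=D_2^\infty$; by the no-triangle axiom any third line meeting both must pass through $H$, so it meets each $\overline{D_i}$ precisely in $H$. Your ``enough freedom'' argument presupposes that $\overline{D_1},\overline{D_2}$ are disjoint GQ-lines (so that the GQ axiom yields a point-to-point matching one can perturb), and it breaks down here. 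Worse, even dropping the residue restriction does not help: for any proper point $p\in D_1$ one has $D_2^\infty=D_1^\infty\subset\overline{D_1}\subset p^\perp$, whence $p^\perp\cap\overline{D_2}=D_2^\infty$, so no proper point of $D_1$ is collinear with any proper point of $D_2$; consequently no $\overline{D}\in\sub_m(\fixpol)$ can meet both $\overline{D_1}$ and $\overline{D_2}$ in $(m-1)$-singulars that are not contained in $\horyzont$. You therefore need a separate argument either showing that this configuration cannot arise under hypothesis (ii), or handling it by other means. (The paper's own proof is equally silent on this subcase.)
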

\begin{proof}
  Let $\overline{A_1} \afbotadjac \overline{A_2}$ hold, so 
  $\dim(\overline{A_1}\cap\overline{A_2}) \geq m-2$. 
  Take $D_1,D_2$ as in \eqref{def:afbotadjacx}, so 
  $\dim(D_1\cap D_2) \geq m-2$.
  If $\dim(D_1\cap D_2) = m-1$ we are through; assume that
  $\dim(D_1\cap D_2) = m-2$.
  From the properties of 
  the polar space $\fixpol$ there is a required subspace $D$
  and thus $A_1 \afbotadjacx A_2$ holds.  

  Now, let $A_1 \afbotadjacx A_2$ and 
  $\dim(\overline{A_1}\cap\overline{A_2}) < m-2$. 
  There are $D'_1,D'_2 \in\sub_m(\fixpol)$ with 
  $D'_1 \cap D'_2 = \overline{A_1}\cap\overline{A_2}$
  and $A_i\subset D'_i$ for $i = 1,2$.
  Note that $D'_1,D'_2 \not\subset\horyzont$ as otherwise there would be
  no $A_1, A_2$.  
  So, we have $D_i:=D_i'\setminus\horyzont$, $i=1,2$ that do not satisfy
  \eqref{def:afbotadjacx} 
  and thus $A_1 \afbotadjacx A_2$ is false.
\end{proof}

Clearly, the $\afbotadjacx$-cliques are restrictions of $\afbotadjac$-cliques
defined on $\sub_{m-1}(\fixpol)$.
In view of \ref{fct:polkliki}, there are three classes of the maximal cliques of 
$\afbotadjacx$: the class $\tops$, the class $\stars$, and the class
\begin{ctext}
  $\starsw = \big\{ \{ U\colon U\parallel U_0 \}\colon U_0\in\sub_{m-1}(\Afpolar) \big\}$.
\end{ctext}
So, in view \ref{lem:gras->polar}, the class $\stars \cup \starsw$ is 
definable in terms of $\GrasSpace(m-1,\Afpolar)$.
\begin{prop}\label{lem:gras->paral}
  The structure $\PencSpaced(m-1,\Afpolar)$ can be defined in terms of\/
  $\GrasSpace(m-1,\Afpolar)$.
  \par
  Thus $\PencSpace(m-1,\Afpolar)$ remains definable in $\GrasSpace(m-1,\Afpolar)$
  and, consequently, one can distinguish $\stars$ and $\starsw$ in terms of the geometry
  of\/ $\GrasSpace(m-1,\Afpolar)$.
  \par
  In particular, $\botadjac$ is definable on $\sub_{m-1}(\Afpolar)$ in terms of $\upadjac$
  and in terms of $\badjac$.
\end{prop}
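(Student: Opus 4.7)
The plan is to reconstruct $\PencSpaced(m-1,\Afpolar)$ inside $\GrasSpace(m-1,\Afpolar)$ by exploiting the classification of maximal $\afbotadjacx$-cliques. First I would note that $\downadjac$ on $\sub_m(\Afpolar)$ is directly available in $\GrasSpace(m-1,\Afpolar)$: two distinct $m$-subspaces are $\downadjac$-related iff they share some $(m-1)$-subspace. Hence the defining formula \eqref{def:afbotadjacx} of $\afbotadjacx$ is expressible over $\GrasSpace(m-1,\Afpolar)$, and by \ref{lem:gras->polar} we have $A_1\afbotadjacx A_2$ iff $A_1\botadjac A_2$ or $A_1\parallel A_2$.

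Next I would classify the maximal $\afbotadjacx$-cliques. Since they are exactly the restrictions to $\sub_{m-1}(\Afpolar)$ of the maximal $\afbotadjac$-cliques in $\fixpol$, Fact \ref{fct:polkliki} produces three types: the topes in $\tops$, the stars in $\stars$, and the parallel classes forming $\starsw$. The class $\tops$ is trivially singled out in $\GrasSpace(m-1,\Afpolar)$, since $\K\in\tops$ iff $\K=\{A\in\sub_{m-1}(\Afpolar):A\subset B\}$ for some $B\in\sub_m(\Afpolar)$. Thus $\stars\cup\starsw$ is defined as the collection of maximal $\afbotadjacx$-cliques lying outside $\tops$.

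Then I would define the line set of $\PencSpaced(m-1,\Afpolar)$ as the family of intersections $\K_1\cap\K_2$ with $\K_1\in\tops$, $\K_2\in\stars\cup\starsw$, and $|\K_1\cap\K_2|\geq 2$. Indeed, $\topof(B)\cap\starof(C)=\penc(C,B)\in\peki_{m-1}(\Afpolar)$, and $\topof(B)$ intersected with a parallel class $\{U:U\parallel U_0\}$ equals $\pencx(U_0,B)\in\pekix_{m-1}(\Afpolar)$; by \ref{prop:afkliki} every line of $\PencSpaced(m-1,\Afpolar)$ arises in this manner. Consequently $\PencSpaced(m-1,\Afpolar)$ is definable in $\GrasSpace(m-1,\Afpolar)$, and then $\PencSpace(m-1,\Afpolar)$ is as well by Corollary \ref{cor:pencp->penc}.

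With $\PencSpace(m-1,\Afpolar)$ in hand, I would separate $\stars$ from $\starsw$ using \ref{fct:geoonkliks}\eqref{fct:geoonkliks:5}: a star in $\stars$ is a strong subspace of $\PencSpace(m-1,\Afpolar)$ carrying the geometry of a projective space, while two parallel $(m-1)$-subspaces are disjoint in $\Afpolar$ and thus non-$\badjac$-adjacent, so any element of $\starsw$ is a $\badjac$-anti-clique. Consequently $A_1\botadjac A_2$ iff $A_1\afbotadjacx A_2$ and no element of $\starsw$ contains both. The final assertion then follows: $\upadjac$ on $\sub_{m-1}(\Afpolar)$ determines $\GrasSpace(m-1,\Afpolar)$ by \eqref{obs:1}, while $\badjac$ on $\sub_{m-1}(\Afpolar)$ determines $\PencSpace(m-1,\Afpolar)$ by Corollary \ref{cor:da+ba->penc} and then $\upadjac$ by Corollary \ref{cor:penc->ua}; either route delivers $\GrasSpace(m-1,\Afpolar)$ and hence $\botadjac$. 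The only subtlety is to ensure that the intersections $\K_1\cap\K_2$ do not yield spurious lines, which is settled by the explicit descriptions in \ref{prop:afkliki}.
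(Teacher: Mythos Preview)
Your overall route coincides with the paper's: recover $\afbotadjacx$ inside $\GrasSpace(m-1,\Afpolar)$, use the three-fold classification $\tops\cup\stars\cup\starsw$ of its maximal cliques, single out $\tops$, realise the lines of $\PencSpaced(m-1,\Afpolar)$ as intersections $\K_1\cap\K_2$ with $\K_1\in\tops$ and $\K_2\in\stars\cup\starsw$, then apply \ref{cor:pencp->penc}.

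There is, however, a genuine error in the step where you separate $\stars$ from $\starsw$. You cite \ref{fct:geoonkliks}\eqref{fct:geoonkliks:5} to assert that a star $\starof(C)\in\stars$ is a strong subspace of $\PencSpace(m-1,\Afpolar)$ carrying a projective geometry. That item concerns $\starsm$, not $\stars$; by \ref{fct:geoonkliks}\eqref{fct:geoonkliks:8} a star $\starof(C)$ is explicitly \emph{not} strong (two $(m-1)$-subspaces through $C$ need not span a common $m$-subspace), and it carries the geometry of a polar space, not a projective one. The paper uses the correct, weaker criterion: $X\in\stars\cup\starsw$ belongs to $\stars$ iff any two $A_1,A_2\in X$ that are joinable in $\PencSpaced(m-1,\Afpolar)$ already lie on a line of $\PencSpace(m-1,\Afpolar)$. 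Your observation that every $X\in\starsw$ is a $\badjac$-anti-clique is correct and could also serve as a separating criterion (each $\starof(C)$ does contain a $\badjac$-adjacent pair: extend $C$ to some $M\in\sub_m(\Afpolar)$ and take two hyperplanes of $M$ through $C$), but the justification you wrote for the $\stars$ side must be replaced.
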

\begin{proof}
  To justify the first statement it suffices to note that the lines of 
  $\PencSpaced(m-1,\Afpolar)$ are the sets of the form 
  ${\cal K}_1\cap {\cal K}_2$, where ${\cal K}_2\in\tops$ and
  ${\cal K}_1\in\stars\cup\starsw$.
\par
  The second claim is a direct consequence of \ref{cor:pencp->penc}.
  Finally, for $X\in\stars\cup\starsw$ we have
  \begin{ctext}
    $X\in\stars$ iff each two $A_1,A_2 \in X$, if joinable in $\PencSpaced(m-1,\Afpolar)$ 
    lie on a line
	of $\PencSpace(m-1,\Afpolar)$.
  \end{ctext}
  This enables us to distinguish respective types of cliques. 
\end{proof}

Evidently, if $A_1,A_2\in\sub_{m-1}(\Afpolar)$ then $A_1\downadjac A_2$
iff $A_1,A_2 \in {\cal K}$ for some ${\cal K}\in\stars$.
Summing up 
\eqref{obs:1}, \ref{cor:penc->ua},  \ref{lem:gras->paral},
\ref{thm:main}, \ref{lem:gras->polar}, \ref{cor:pencp->penc}, and \ref{thm:main}
we get 
\ref{cor:main}
for $k=m-1$.

\section{Final remarks}

Our main result, Theorem~\ref{thm:main}, reads (formally) more or less the same 
way as the main statements of \cite{afpolar}: Theorem~4.1 together with
Theorem~5.6.
The three adjacencies $\upadjac$, $\downadjac$, $\badjac$ on strong subspaces were
also introduced in \cite{afpolar}.
The key characterization of adjacency cliques in \ref{prop:afkliki} resembles analogous
characterizations with similar formulas in \cite{afpolar}: Fact~2.3 and Proposition~2.4.
Another set of important properties of cliques, gathered in Fact~\ref{fct:geoonkliks},
corresponds to considerations on pages 45-46 in \cite{afpolar}. Definability of pencils
in terms of $\upadjac$-adjacency in Lemma~\ref{lem:linesx} resembles Proposition~3.2 
in \cite{afpolar}. The same idea to distinguish selectors from star cliques
shown in Lemma~\ref{lem:difcliq-da-ba} is also used in Corollary~3.8 in \cite{afpolar}.
An analogue of Corollary~\ref{cor:da+ba->penc} where definability of $\PencSpace(k,\AfPolar)$
in terms of $\badjac$ is stated, has been proved in Proposition~3.12 in \cite{afpolar}.
Finally, definition \eqref{def:afbotadjacx} and use of the relation $\afbotadjacx$
are based on similar idea as 
those used in the proof of Proposition~5.5 in \cite{afpolar}.


\bigskip
\begin{small}
\noindent
Authors' address:
\\
Krzysztof Pra{\.z}mowski,
Mariusz {\.Z}ynel
\\
Institute of Mathematics, University of Bia{\l}ystok
\\
ul. Akademicka 2, 15-267 Bia{\l}ystok, Poland
\\
\verb+krzypraz@math.uwb.edu.pl+,
\verb+mariusz@math.uwb.edu.pl+
\end{small}


\begin{thebibliography}{9}\footnotesize\itemsep0pt

\bibitem{Alexandrov}
{\sc Alexandrov, A.~D.}
\newblock Mappings of spaces with families of cones and space-time-transformations.
\newblock {\em Annali di Mat. 103\/} (1975), 229--256.

\bibitem{bichara}
{\sc Bichara A., Mazzocca F.}
\newblock On a characterization of the {G}rassmann spaces associated with an affine space.
\newblock {\em Ann. Discr. Math. 18\/} (1983), 95--112.

\bibitem{BS74}
{\sc Buekenhout, F., and Shult, E.}
\newblock On the foundations of polar geometry.
\newblock {\em Geometriae Dedicata 3\/} (1974), 155--170.

\bibitem{chow}
{\sc  Chow, W.-L.}
\newblock On the geometry of algebraic homogeneous spaces. 
\newblock {\em Ann. Math. 50} (1) (1949), 32--67.

\bibitem{Cohen}
{\sc Cohen, M.~A.}
\newblock Point-line spaces related to buildings.
\newblock In {\em Handbook of incidence geometry}, F.~Buekenhout, Ed.
  North-Holland, Amsterdam, 1995, pp.~647--737.

\bibitem{cohenshult}
{\sc Cohen, A.~M., and Shult, E.~E.}
\newblock Affine polar spaces.
\newblock {\em Geom. Dedicata 35\/} (1990), 43--76.

\bibitem{dentice}
{\sc Dentice E.F., Lo Re P.M.}
\newblock  Embeddings of affine {G}rassmann spaces.
\newblock {\em J. Geom. 93\/} (2009), 62--70.

\bibitem{huang2}
{\sc Havlicek, H., Huang, W.-L},
\newblock Diameter preserving surjections in the geometry of matrices.
\newblock {\em Linear Algebra Appl. 429\/} (2008), 376--386. 

\bibitem{huang4}
{\sc Huang, W.-L.},
\newblock Adjacency preserving transformations of Grassmann spaces.
\newblock {\em Abh. Math. Sem. Univ. Hamb. 68\/} (1998), 65--77.

\bibitem{huang3}
{\sc Huang, W.-L.},
\newblock Adjacency preserving mappings of invariant subspaces of a
null system. 
\newblock {\em Proc. Amer. Math. Soc. 128\/} (2000), 2451--2455.

\bibitem{huang1}
{\sc Huang, W.-L.},
\newblock Adjacency preserving mappings between point-line geometries,
\newblock {\em Innov. Incidence Geom. 3\/} (2006), 25--32.

\bibitem{Kr91}
{\sc Kreuzer, A.}
\newblock A remark on polar geometry.
\newblock {\em Abh. Math. Sem. Univ. Hamb. 61\/} (1991), 213--215.

\bibitem{Pamb06}
{\sc Pambuccian, V.}
\newblock Elementary axiomatizations of projective space and of its associated {G}rassmann space.
\newblock {\em Note di Matematica 24}, 1 (2005), 129--141.

\bibitem{polargras}
{\sc Pankov, M., Pra\.zmowski, K., and \.Zynel, M.}
\newblock Geometry of polar {G}rassmann spaces.
\newblock {\em Demonstratio Math. 39}, 3 (2006), 625--637.

\bibitem{afpolar}
{\sc Pra{\.z}mowska, M., Pra{\.z}mowski, K., and {\.Z}ynel, M.}
\newblock Affine polar spaces, their Grassmannians, and adjacencies.
\newblock {\em Math. Pannonica 20}, 1 (2009), 37--59.

\bibitem{sapls}
{\sc Radziszewski, K.}
\newblock Subspaces and parallelity in semiaffine partial linear spaces.
\newblock {\em Abh. Math. Sem. Univ. Hamb. 73} (2003), 131--144.

\bibitem{tallini}
{\sc Tallini, G.},
\newblock Partial line spaces and algebraic varieties.
\newblock {\em Sympos. Math. 28\/} (1986), 203--217.

\bibitem{Tits}
{\sc Tits, J.}
\newblock {\em Buildings of spherical type and finite {BN}-pairs}, vol.~386 of
  {\em Lecture Notes in Mathematics}.
\newblock Springer, Berlin, 1974.

\bibitem{Veldkamp}
{\sc Veldkamp, F.~D.}
\newblock Polar geometry {I-IV}.
\newblock {\em Indag. Math. 21\/} (1959), 512--551.

\bibitem{Zeeman}
{\sc Zeeman, E.~C.}
\newblock Causality implies the {L}orentz group.
\newblock {\em J. Mathemathical Physics 5\/} (1964), 490--493.

\end{thebibliography}
\end{document}